\newtheorem{theorem}{Theorem}
\newtheorem{prop}[theorem]{Proposition}
\newtheorem{cor}[theorem]{Corollary}
\newtheorem{lemma}[theorem]{Lemma}
\newtheorem{conj}[theorem]{Conjecture}
\theoremstyle{definition}
\newtheorem{example}[theorem]{Example}
\def\la{{\lambda}}
\def\QQ{{\mathbb Q}}
\def\Ht{{\tilde H}}
\def\ga{{\gamma}}
\def\BC{{\mathbb C}}
\def\BB{{\mathbb B}}
\def\Catp{{\rm Cat}'}
\def\area{{\rm area}}
\def\dinv{{\rm dinv}}
\def\bounce{{\rm bounce}}
\def\Rise{{\rm Rise}}
\def\rise{{\rm rise}}
\def\DD{{\mathcal D}}
\def\Sch{{\bf Sch}}
\def\dia{{\rm diag}}
\def\Dia{{\rm Diag}}
\def\touch{{\rm touch}}
\newdimen\squaresize \squaresize=12pt
\newdimen\thickness \thickness=0.4pt
\def\square#1{\hbox{\vrule width \thickness
     \vbox to \squaresize{\hrule height \thickness\vss
        \hbox to \squaresize{\hss#1\hss}
     \vss\hrule height\thickness}
\unskip\vrule width \thickness}
\kern-\thickness}
\def\vsquare#1{\vbox{\square{$#1$}}\kern-\thickness}
\def\thisbox#1{\kern-.09ex\fbox{#1}}
\def\downbox#1{\lower1.200em\hbox{#1}}
\title[A proof of the Delta conjecture Catalan]{A proof of the $4$-variable Catalan polynomial of the Delta conjecture}
\author{Mike Zabrocki}
\begin{document}

\begin{abstract}
In {\it The Delta Conjecture} \cite{HRW15}, Haglund, Remmel and Wilson
introduced a four variable $q,t,z,w$-Catalan polynomial,
so named because the specialization of this polynomial at the values $(q,t,z,w) = (1,1,0,0)$
is equal to the Catalan number $\frac{1}{n+1}\binom{2n}{n}$.  We prove the
compositional version of this conjecture (which implies the non-compositional version)
that states that the coefficient of $s_{r,1^{n-r}}$
in the expression $\Delta_{h_\ell} \nabla C_\alpha$ is equal to a weighted sum over decorated Dyck paths.
\\\\
{\it This paper is dedicated to the memory of Jeffery Remmel (1948--2017).}
\end{abstract}

\maketitle

\begin{section}{Introduction}

In the search for a representation theoretical interpretation for
Macdonald symmetric functions, Haiman defined the module of
diagonal harmonics \cite{Hai94} as a quotient of the polynomial ring
in two sets of $n$ variables.  For a given integer $n$, the diagonal harmonics are
a bi-graded $S_n$-module with dimension $(n+1)^{n-1}$.
Garsia and Haiman \cite{GarHai96} took a
(at the time conjectured) formula for the bi-graded Frobenius characteristic of the diagonal harmonics
and defined for each $n$ a rational function in two parameters
$q$ and $t$ which is equal to the bi-graded multiplicity of
the alternating representation in the module.
This expression is known
as the $q,t$-Catalan polynomial \cite{GarHai96} since at $q=t=1$ it specializes to the Catalan
number $\frac{1}{n+1}\binom{2n}{n}$.  In 2000, Garsia and Haglund \cite{Hag03, GarHag02} 
announced a proof
that the $q,t$-Catalan was a polynomial 
in $q$ and $t$ with non-negative integer coefficients and provided a combinatorial interpretation
for the expression in terms of Dyck paths. 

Important progress was made in the development of that proof through the
introduction of the notation of two linear symmetric function
operators $\nabla$ and $\Delta_f$
that have Macdonald symmetric functions as eigen-functions \cite{BG99, BGHT99}.
The expression $\nabla( e_n )$ was conjectured to be
equal to the Frobenius image of the character of the module of diagonal
harmonics and the $q,t$-Catalan polynomial is the coefficient of $e_n$ in this
expression.  The operators $\nabla$ and $\Delta_f$ gave notation to
extend the types of symmetric function expressions which were conjectured
to be Schur positive for representation theoretic reasons to expressions
which are conjectured to be Schur positive because of computer experimentation.

Haglund conjectured \cite{Hag03} and shortly after Garsia and
Haglund \cite{GarHag02} proved a combinatorial interpretation for the
$q,t$-Catalan polynomial.  They showed that there were two statistics on Dyck paths
(called $\area$ and $\bounce$) such that the rational expression for the
$q,t$-Catalan is equal to the sum over all Dyck paths $D$ with weight
$q^{\area(D)}t^{\bounce(D)}$.  Around this same period, Haiman \cite{Hai02} proved
the conjecture that $\nabla(e_n)$
was equal to the Frobenius
image of the graded character of the diagonal harmonics.  Haiman also guessed at a second statistic
($\dinv$, short for {\it diagonal inversions}) such that the $q,t$-Catalan is equal to
the sum over all Dyck paths with weight $q^{\dinv(D)}t^{\area(D)}$ and
Haglund later showed with a bijection why the two combinatorial expressions are equivalent.

With the conjectures on the $q,t$-Catalan polynomial resolved,
Haglund, Haiman, Loehr, Remmel and Ulyanov \cite{HHLRU05} extended the
combinatorial interpretations for the coefficient
of $e_n$ in $\nabla(e_n)$ to
other coefficients.
They conjectured the coefficient of any monomial symmetric function 
in terms of labelled Dyck paths (also known as parking functions) and this
became known as {\it the Shuffle Conjecture}.
The Shuffle Conjecture takes its name because the coefficient of
a monomial is equal to the number of labelled Dyck paths whose reading
word is a shuffle of segments of length the parts of the partition.

Researchers also considered coefficients of $\nabla$ and $\Delta_f$ acting
on other symmetric functions and extended the combinatorial interpretations
to coefficients in these expressions (e.g. \cite{EHKK03, Hag04, CL06, LW07, LW08}
and for a survey of results in this area up to 2008 see \cite{Hag08}).

In particular, a refinement of the Shuffle Conjecture was proposed by Haglund, Morse and
the author \cite{HMZ12} that gave a symmetric function expression for the
labelled Dyck paths which touch the diagonal at a given composition.  Some progress
on this {\it Compositional Shuffle Conjecture} was made \cite{GXZ10, Hic10, DGZ13, 
Hic14, GXZ14a, GXZ14b} before it was finally
proven by Carlsson and Mellit \cite{CM15}.
By the time that Carlsson and Mellit had announced their proof, there was already a
rational slope version of the compositional shuffle conjecture \cite{BGLX16}.
The arms race of conjecture vs. proof in this area did not stay out of balance for long and
a proof of this result was announced in 2016 by Mellit \cite{Mel16}.

Haglund, Remmel and Wilson \cite{HRW15} recently announced a conjecture for some
combinatorial expressions involving $\Delta_f$ and $\nabla$ in a
sequence of conjectures that generalize the Shuffle Conjecture
from labelled Dyck paths to decorated labelled Dyck paths
and called this {\it the Delta Conjecture}.
There does not currently exist a compositional version of this
conjecture which might be helpful if progress is to be made on proving it.

They noticed however that the coefficients of a Schur function indexed by
a hook in the expression $\Delta_{h_\ell} \nabla e_{n}$
had similar behavior to the $q,t$-Catalan \cite{GarHai96, GarHag02} and $q,t$-Schr\"oder \cite{EHKK03, Hag04}
and they proposed a four parameter expression $C_n(q,t,w,z)$
and a combinatorial interpretation for this expression in terms of decorated Dyck paths.
In fact they proposed two combinatorial interpretations and one of them
is compatible with the compositional refinement proposed by Haglund, Morse and the author \cite{HMZ12}.
It is this conjecture that we shall prove here.

Although this is not precisely how the combinatorial interpretation was formulated in \cite{HRW15},
we will present it here in terms of decorated Schr\"oder paths.  Schr\"oder paths were used as a
combinatorial description for the coefficients of Schur function indexed by a hook in the expression
$\nabla(e_n)$ in \cite{EHKK03,Hag04}.  In this paper we will give a combinatorial description for
the coefficient of a Schur function indexed by a hook in the expression $\Delta_{h_\ell} \nabla( e_{n-\ell})$
as Schr\"oder paths with $\ell$ vertical segments decorated with a $\circ$ symbol.

In fact, a Schr\"oder path is simply a Dyck path with some of the peaks in the Dyck path
changed to $NE$-diagonal steps.  In all of the Schr\"oder paths
we will also insist that the right most peak
in the highest diagonal not have a $NE$-diagonal step.\footnote{In an early version
of \cite{HRW15}, the combinatorial
interpretation was stated in terms of
$\circ$-decorated Dyck paths where there is a difference
in the decorations on the peaks and double rises.  The latest version does not
express the combinatorial interpretation for the coefficients in terms of
decorated Dyck paths at all, but it is a useful construction in relating the
right and left hand side of Theorem \ref{thm:mainresult}.
Here we use Schr\"oder paths to distinguish the
$\circ$-decorations on the peaks (which are diagonal edges here)
from those on the double rises.}

A Schr\"oder path is a generalization of a Dyck path that is a lattice path in the $n \times n$ square
that start in the South-West corner and go to the North-East corner allowing for steps North,
East and diagonal steps which are North-East such that the path
stays above the South-West/North-East diagonal.
A {\it $\circ$-decorated Schr\"oder path} is a Schr\"oder path in which some of the vertical steps
which are not peaks are decorated with a $\circ$.  We will show that
the coefficient of $s_{k+1,1^{n-k-\ell-1}}$ in $\Delta_{h_\ell} \nabla e_{n-\ell}$ is a
$q,t$ enumeration of the $\circ$-decorated Schr\"oder paths of length $n$ with $k$ diagonal North-East steps
and $\ell$ vertical segments decorated with $\circ$.  In particular we will show that 
$\langle \Delta_{h_\ell} \nabla( e_{n-\ell}), e_{n-\ell} \rangle$
is a positive polynomial in $q,t$ that enumerates $\circ$-decorated Dyck paths.

\begin{figure}[h]
\begin{center}
\includegraphics[width=2.5in]{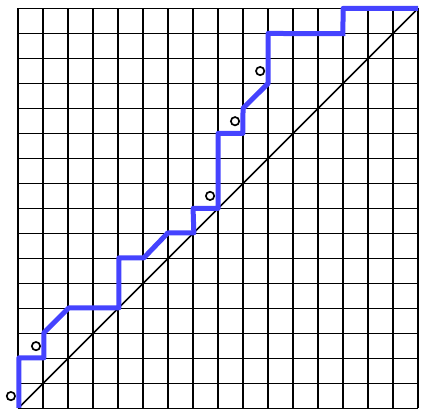}
\end{center}
\caption{An example of a $\circ$-decorated Schr\"oder path.
In this example, $\rise_\circ(D)=5$ because there are five 
$\circ$-decorated vertical segments 
and $\dia(D)=3$ because there are three NE-diagonal steps.
The usual touch composition is $(4,3,1,8)$, but the
rise-touch composition is equal to $(2,3,1,5)$ because two vertical segments are $\circ$-decorated
in the first touch segment and three vertical segments are $\circ$-decorated in the last touch
segment. \label{fig:sample}}
\end{figure}

For a given $\circ$-decorated Schr\"oder path $P$,
the number of $\circ$-decorations on the path will be denoted $\rise_\circ(P)$
and the number of diagonal NE steps will be denoted $\dia(P)$.
The positions where the Schr\"oder path touches the diagonal divides the path
into segments and determines a composition,
$\touch(P) = (\alpha_1, \alpha_2, \ldots, \alpha_{\ell(\alpha)})$ where $\alpha_i$ is the
length of the $i^{th}$ segment.   We will also consider the rise-touch
composition $\touch_\circ(P) = (\beta_1, \beta_2, \ldots, \beta_{\ell(\alpha)})$
where $\beta_i$ is equal to $\alpha_i$ minus the number of $\circ$-decorations in
the $i^{th}$ segment.  There are two additional statistics on these
paths $\area_\circ(P)$ and $\dinv_\circ(P)$ which we will explain in detail in Section
\ref{sec:comb}.

Namely we will show the following theorem
(this is Theorem \ref{thm:mainresult}; note: we leave the definition of
the statistics $\dinv_\circ$ and $\area_\circ$
to Section \ref{sec:comb}):
\begin{theorem}  For non-negative integers $n, k, \ell$ and a composition $\alpha$ of size $n-\ell$,
\begin{equation} 
\left< \Delta_{h_\ell} \nabla C_\alpha, s_{k+1,1^{|\alpha|-k-1}} \right> =
\sum_{\substack{P \in \Sch^\circ_{n}\\ \dia(P)=k, \rise_\circ(P) = \ell\\
\touch_\circ(P) = \alpha}}
q^{\dinv_\circ(P)} t^{\area_\circ(P)}
\end{equation}
where the sum is over all $\circ$-decorated Schr\"oder paths with $k$ NE-diagonal steps
and $\ell$ $\circ$-decorated rises and rise-touch composition equal to $\alpha$.
\end{theorem}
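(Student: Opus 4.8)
\emph{Overall approach.} The plan is to show that both sides of the identity obey the same recursion in the triple $(\alpha,k,\ell)$ — note that $\ell=n-|\alpha|$ is determined by $\alpha$ and $n$ — together with the same base cases, following the template that proved the compositional $(q,t)$-Catalan and shuffle theorems but carrying the two kinds of decorations along. Write $\mathrm{CF}^{(k)}_\alpha(q,t)$ for the right-hand side. The base cases are $\alpha=\emptyset$, where only the empty path contributes, and $\ell=0$, where the statement reduces to computing $\langle\nabla C_\alpha,s_{k+1,1^{|\alpha|-k-1}}\rangle$. For the latter I would invoke the compositional shuffle theorem of Carlsson and Mellit \cite{CM15}, which gives the Gessel-fundamental expansion of $\nabla C_\alpha$ over labelled Dyck paths touching the diagonal along $\alpha$; after the standard signed passage from that expansion to the Schur expansion, extracting the coefficient of the hook $s_{k+1,1^{|\alpha|-k-1}}$ becomes a weighted count of such labelled paths with a prescribed descent pattern, and these should be put in statistic-preserving bijection with the $\circ$-decorated Dyck paths having $k$ decorated peaks and no decorated double rises (a lemma to be recorded in Section~\ref{sec:comb}).

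\emph{The combinatorial recursion.} For a nonempty $D\in\DD^\circ_n$ I would peel off its bottom: the path starts with a north step, and one splits into cases according to whether the next step is north or east, whether this first north step is the top of a decorated double rise, whether the first peak carries a decoration, and whether $D$ returns to the diagonal immediately. Each case removes an initial segment and produces a smaller decorated Dyck path, with controlled changes to $\area_\circ$, $\dinv_\circ$, the decorated-peak count, and the rise-touch composition $\alpha^{\Rise}$; assembling the cases gives a linear recursion for $\mathrm{CF}^{(k)}_\alpha(q,t)$ with monomial coefficients in $q,t$. The only case not already present in the undecorated compositional $(q,t)$-Catalan recursion is the removal of a decorated double rise: this decreases $n$ by $1$, leaves $|\alpha|$ and (appropriately) the decorated-peak count fixed, and contributes an explicit power of $t$ together with a controlled shift in $\dinv_\circ$.

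\emph{The symmetric-function recursion.} Here I would use the factorization $C_\alpha=\mathbf{C}_{\alpha_1}\mathbf{C}_{\alpha_2}\cdots(1)$ into the operators $\mathbf{C}_a$ of \cite{HMZ12}, their plethystic formulas, and their commutation relations with $\nabla$ — precisely the input behind the compositional shuffle recursion — to reproduce every case except the decorated-double-rise removal. For that case the new ingredient is an identity peeling one ``$\Delta_h$-step'', i.e.\ rewriting $\Delta_{h_\ell}\nabla(\cdot)$ in terms of $\Delta_{h_{\ell-1}}\nabla$ applied to a slightly modified argument: in the $\tilde H_\mu$ basis one has $\Delta_{h_\ell}\nabla\tilde H_\mu=h_\ell[B_\mu]\,T_\mu\,\tilde H_\mu$, and one must convert the factor $h_\ell[B_\mu]$ — using $\sum_{\ell\ge0}z^\ell h_\ell[B_\mu]=\prod_{c\in\mu}(1-zq^{a'(c)}t^{l'(c)})^{-1}$ and the Pieri-type addition rules for modified Macdonald polynomials — into a sum over ways of adjoining a box, matched against the combinatorial move. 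Comparing the two recursions term by term, with the base cases, yields the theorem.

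\emph{Main difficulty.} I expect the crux to be exactly this last matching: $\Delta_{h_\ell}$, unlike $\nabla$, does not commute cleanly with the $\mathbf{C}_a$, so the recursion must be organized so that decorated-double-rise removals are performed from a controlled position, which likely forces a particular order of peeling the path (and of building $C_\alpha$) and a delicate plethystic computation to pin down the resulting $q,t$-weights; in particular one must verify that the decorated-double-rise move changes $(\dinv_\circ,\area_\circ)$ in exactly the way the $\Delta_h$-step changes the $(q,t)$-grading, while keeping the decorated-peak count (hence the hook index) synchronized. A smaller but necessary check is that the combined recursion, in the chosen partial order on $(\alpha,k,\ell)$, determines $\mathrm{CF}^{(k)}_\alpha$ uniquely.
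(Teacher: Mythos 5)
Your plan --- show both sides satisfy a common recursion plus base cases, with the combinatorics built from peeling the first arch and the symmetric function side from the $\BC_a$ creation operators and Pieri coefficients --- is indeed the paper's strategy at the structural level. But as written there are two substantive gaps.

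First, the Carlsson--Mellit detour for the $\ell=0$ base case is unnecessary and, as sketched, circular in spirit. The paper's recurrence is simultaneous in $(|\alpha|,\ell)$: one deletes the first north step and the first return step of the leading arch, then \emph{cyclically rotates} the truncated arch to the end of the path (the rotation, not just removal, is essential for keeping the $\BC$-operator indexing intact, since $\BB_m(C_\beta)=q^{\ell(\beta)}\sum_{\ga\models m}C_{\beta,\ga}$ appends at the end). If the deleted north step was undecorated, $(a+1,\beta)$ becomes some $(\beta,\ga)$ with $\ga\models a$, so $|\alpha|$ drops by one and $\ell$ is fixed; if it was decorated, $\ga\models a+1$, so $|\alpha|$ is fixed and $\ell$ drops by one. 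Either way $n=|\alpha|+\ell$ decreases by exactly one, so an induction on $n$ needs only $\alpha=(1)$ as its base case --- a one-line check. Meanwhile the $\ell=0$ statement you want to import is exactly the compositional Schr\"oder theorem, proved in \cite{GXZ10} by this same recursion; it is not available as a black box here, and the passage from the Carlsson--Mellit fundamental expansion to hook Schur coefficients to $\circ$-decorated paths is itself an unwritten sub-theorem of comparable weight.

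Second, the proposed ``$\Delta_h$-step'' is not a separable lemma grafted onto the $\BC$-peeling; the two are entangled in a single identity that is the heart of the paper:
\begin{equation*}
\BC_{k+1}^\ast\Delta_{h_d}\nabla(e_r^\ast h_m^\ast)
= t^k\BB_k^\ast\Delta_{h_d}\nabla(e_r^\ast h_{m-1}^\ast)
+ t^k\BB_{k+1}^\ast\Delta_{h_{d-1}}\nabla(e_r^\ast h_m^\ast)
+ \chi(k=0)\,\Delta_{h_d}\nabla(e_{r-1}^\ast h_m^\ast).
\end{equation*}
Its two main terms mirror the two combinatorial cases above. The derivation proceeds on the adjoint (kernel) side after pairing against $C_\beta$: expand $h_d[B_\mu]=\sum_a h_a[D_\mu/M]\,h_{d-a}[1/M]$, apply the reciprocity $\Ht_\mu[D_\nu]=(-1)^{|\mu|+|\nu|}\Ht_\nu[D_\mu]\,T_\mu/T_\nu$, and use the Garsia--Haglund summation $\sum_{\nu\subseteq\ga}c_{\ga\nu}^{e_a\perp}=e_a[B_\ga]$ to collapse the kernel into a form where the plethystic formulas for $\BB_m^\ast$ and $\BC_m^\ast$ can be read off directly. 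You name these ingredients correctly, but without carrying out this computation the proposal asserts rather than proves the matching, and that computation is essentially the whole argument.
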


The symmetric function $C_\alpha$ which appears in this theorem is compositional form of
a Hall-Littlewood symmetric function which was introduced in \cite{HMZ12}.  The definition
appears in section \ref{sec:compsf}.  By Proposition 5.2 of \cite{HMZ12},
$e_n = \sum_{\alpha \models n} C_\alpha$ and hence we have given a combinatorial interpretation
for $\langle \Delta_{h_\ell} \nabla e_{n-\ell}, s_{k+1,1^{n-\ell-k-1}} \rangle$.

Note that the resolution of this conjecture does not prove all of the conjectures
made in Section 7 of \cite{HRW15} because there was a second combinatorial interpretation stated
for the coefficient $\left< \Delta_{h_\ell} \nabla e_{n-\ell}, s_{k+1,1^{n-k-1}} \right>$
that does not seem to be compatible with the compositional version and our techniques
depend strongly on compatibility with the compositional construction.
\end{section}

\begin{section}{Symmetric functions} \label{sec:sf}

The results related to Macdonald symmetric functions that we will use here almost all come
from a series of early papers on the subject
\cite{Mac88,G92,GarHai95,GarHai96,BG99, BGHT99,GHT99,GarHag02}.  These results have
proven to be very prescient in the utility of the
identities, notation and techniques developed.  We will be able to prove
our symmetric function recurrence
by using the groundwork paved in these references.  The book by J. Haglund \cite{Hag08} collects
many of the identities that we will use in a review of the literature and hence will provide a useful
reference for their use.  The only additional ingredient that
we will use are the creation operators and symmetric functions introduced in \cite{HMZ12}
which play an important role in developing recurrences for the coefficients in which we
are interested.

\begin{subsection}{Symmetric function notation}
The main reference we will use for symmetric functions is \cite{Mac95}.  The standard bases
of the symmetric functions that will appear in our calculations the complete $\{ h_\lambda \}_\lambda$, 
elementary $\{ e_\lambda \}_\lambda$, power $\{ p_\lambda \}_\lambda$ and Schur $\{ s_\lambda \}_\lambda$ bases.

The ring of symmetric functions can be thought of as the polynomial ring in the
power sum generators $p_1, p_2, p_3, \ldots$.  
As we are working with Macdonald symmetric functions involving
two parameters $q$ and $t$ we will consider this polynomial ring over the field $\QQ(q,t)$.

We will make extensive use of plethystic notation in our calculations and arbitrary alphabets.  This is
a notational addition that introduces union and difference of alphabets.  Alphabets will be
represented as sums of monomials $X = x_1 + x_2 + x_3 \ldots$ and then the
expression $f[X]$ represents the symmetric function $f$ as an element of $\Lambda$ with $p_k$
replaced by $x_1^k + x_2^k + x_3^k + \cdots$.
We have the identities that $p_k[X+Y] = p_k[X] + p_k[Y]$, $p_k[X-Y] = p_k[X] - p_k[Y]$, and on
the elementary and homogeneous bases we also have the alphabet addition formulae which say
\begin{equation}\label{eq:alphaaddition}
e_n[X+Y] = \sum_{k=0}^n e_k[X] e_{n-k}[Y]\hbox{ and }h_n[X+Y] = \sum_{k=0}^n h_k[X] h_{n-k}[Y]~.
\end{equation}
The notation $\epsilon$ is a common tool to express a second sort of negative sign when working with
symmetric functions with alphabets
where $p_k[\epsilon X] = (-1)^k p_k[X]$.  This is different from the negative
of the alphabet expressed as $p_k[-X] = - p_k[X]$.  In general
$f[- \epsilon X] = (\omega f)[X]$ where $\omega$ is the fundamental algebraic involution which sends
$e_k$ to $h_k$, $s_\lambda$ to $s_{\lambda'}$ and $p_k$ to $(-1)^{k-1} p_k$.

There is a special element $\Omega$ in the completion of the symmetric functions that we will be using.
It is defined as $\Omega = \sum_{n \geq 0} h_n$.  It has the property for arbitrary alphabets $X$ and $Y$,
$\Omega[X + Y] = \Omega[X]\Omega[Y]$~.
In addition, it has the property that for any two dual basis $\{ a_\lambda \}_\lambda$ 
and $\{b_\lambda\}_\lambda$ with respect to the standard scalar product $\left< s_\lambda, s_\mu \right> = \chi(\lambda=\mu)$,
we have
\begin{equation}
\Omega[XY] = \sum_{\lambda} a_\lambda[X] b_\lambda[Y]~.
\end{equation}
\end{subsection}

\begin{subsection}{Macdonald symmetric function toolkit and $q,t$ notation}
Macdonald symmetric functions that are used here are a transformation of the bases presented in \cite{Mac95}.
They are the symmetric functions that are the Frobenius image of the Garsia-Haiman modules \cite{GarHai93}
indexed by a partition.
The symmetric functions
\begin{equation}
\Ht_\mu[X;q,t] = \sum_{\lambda \vdash |\mu|} K_{\lambda\mu}(q,1/t) t^{n(\mu)} s_\lambda[X]
\end{equation}
where $K_{\lambda\mu}(q,t)$ are the Macdonald $q,t$-Kostka coefficients and $n(\mu) = \sum_{i \geq 1} (i-1) \mu_i$.
The basis elements $\{ \Ht_\mu \}_\mu$
are orthogonal with respect to the scalar
product
\begin{equation}\label{eq:defstarscalar}
\left< p_\lambda, p_\mu \right>_\ast = \chi(\lambda=\mu) 
(-1)^{|\la|+\ell(\la)}\prod_{i=1}^{\ell(\la)}(1-q^{\lambda_i})(1-t^{\lambda_i})
\end{equation}
and are sometimes defined by this property.

\begin{figure}[h]
\begin{center}
\includegraphics[width=2in]{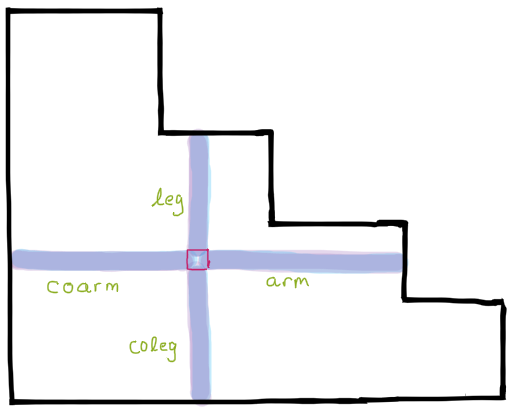}
\end{center}
\begin{caption}{arm, leg, co-arm and co-leg of a cell of the diagram}\label{fig:armleg}
\end{caption}
\end{figure}

If we identify the partition $\mu$ with the collection of cells $\{ (i,j) : 1 \leq i \leq \mu_i, 1 \leq j \leq \ell(\mu) \}$,
then for each cell $c \in \mu$ we refer to the the arm, leg, co-arm and co-leg
(denoted respectively as $a_\mu(c), \ell_\mu(c), a'_\mu(c), \ell_\mu'(c)$)
as the number of cells in the segments labeled in Figure \ref{fig:armleg}.  The typical shorthand
for the polynomial expressions in $q$ and $t$ are
\begin{equation*}
B_\mu = \sum_{c \in \mu} q^{a_\mu'(c)} t^{\ell_\mu'(c)}, 
T_\mu = \prod_{c \in \mu} q^{a_\mu'(c)} t^{\ell_\mu'(c)}
\hbox{ and }w_\mu = \prod_{c \in \mu} (q^{a_\mu(c)}-t^{\ell_\mu(c)+1})(t^{\ell_\mu(c)}-q^{a_\mu(c)+1})~.
\end{equation*}
Also set $M = (1-q)(1-t)$ and $D_\mu = M B_\mu -1$.

The following linear operators were introduced in \cite{BG99, BGHT99} which are at the basis of the
conjectures relating symmetric function coefficients and $q,t$-combinatorics in this area.  Define
\begin{equation}
\nabla(\Ht_\mu) = T_\mu \Ht_\mu \hbox{ and } \Delta_{f}( \Ht_\mu ) = f[B_\mu] \Ht_\mu~.
\end{equation}
Note that if $n = |\mu|$, then $e_n[ B_\mu ] = T_\mu$, hence for a symmetric function $f$ of
homogeneous degree $n$, $\Delta_{e_n}(f) = \nabla(f)$, so the operators $\Delta_f$ are seen
as a more general operator than $\nabla$.

Following other references and introduce the shorthand notation
$f^\ast[X] = f \!\!\left[ \frac{X}{M} \right]~.$
This notation can then be used to relate the $\ast$-scalar product with the usual scalar product
$\left< f, g \right>$ where the Schur functions are orthonormal
since $\left< f, g \right> = \left< f, \omega g^\ast \right>_\ast$.  It is known that
$\left< \Ht_\lambda, \Ht_\mu\right>_\ast = \chi(\lambda=\mu) w_\lambda$, then
follows that
\begin{equation} \label{eq:enexpansion}
\Omega\left[\frac{- \epsilon XY}{M}\right] = \sum_{n \geq 0} e_n^\ast[XY] =
\sum_{\mu\vdash n} \frac{\Ht_\mu[X] \Ht_\mu[Y]}{w_\mu}~.
\end{equation}

We will use one of the forms of Macdonald-Koornwinder reciprocity in our calculations
 (see \cite{Mac95} p. 332 or \cite{GHT99}),
\begin{equation}
\frac{\Ht_\mu[1+u D_\lambda]}{\prod_{c \in \mu} (1- u t^{\ell'(c)} q^{a'(c)})}
= 
\frac{\Ht_\la[1+u D_\mu]}{\prod_{c \in \la} (1- u t^{\ell'(c)} q^{a'(c)})}~.
\end{equation}
The form of this identity that we are most interested here is found by 
setting $u = 1/u$, clearing the denominators, and letting $u \rightarrow 0$
to obtain
\begin{equation} \label{eq:GHTreciprocity}
\Ht_\mu[D_\nu] = (-1)^{|\mu|+|\nu|} \Ht_\nu[D_\mu] \frac{T_\mu}{T_\nu}~.
\end{equation}
\end{subsection}
\begin{subsection}{Pieri rules and summation formulae}
Define coefficients $h_1^\perp \Ht_\mu = \sum_{\nu \rightarrow \mu} c_{\mu\nu} \Ht_\nu$ and
$h_1 \Ht_\nu = \sum_{\mu \leftarrow \nu} d_{\mu\nu} \Ht_\mu$.
It was proven in \cite{GarHai95} (Corollary 1.1) that they are related by the identity,
\begin{equation}\label{eq:cmunudmunurel}
d_{\ga\tau} =  M c_{\ga\tau} \frac{w_\tau}{w_\ga}~.
\end{equation}

The following identity has been used frequently in
work on the Shuffle Conjecture
but a full proof did not appear until recently in \cite{GHXZ16}.
For $s \geq 0$,
\begin{equation}\label{eq:esDga}
e_{s-1}\big[D_\gamma \big] =
(-1)^{s-1} \sum_{\nu \leftarrow \gamma} d_{\nu\gamma} \left(\frac{T_\nu}{T_\gamma}\right)^{s} + \chi(s=0)
\end{equation}
where we have denoted $\chi(true) = 1$ and $\chi(false) = 0$ so that the term $\chi(s=0)$ only appears
in the case that $s=0$.  The other sum of Pieri coefficients for Macdonald polynomials
was proven in a $q,t$ hook walk by Garsia and Haiman \cite{GarHai95} for $s \geq 0$,
\begin{equation}\label{eq:hkDga}
h_{s+1}[D_\ga] = 
M t^{s} q^{s} \sum_{\tau \rightarrow \ga} c_{\ga\tau} \left( \frac{T_\ga}{T_\tau} \right)^{s} - \chi(s=0)~.
\end{equation}

In order to prove the combinatorial formula for the $q,t$-Catalan polynomial,
Garsia and Haglund introduced a generalization of
the Pieri coefficients and proved a summation formula which we will use here.
They defined coefficients $d_{\mu\nu}^{f}$ and $c_{\mu\nu}^{f\perp}$ where $\nu \subseteq \mu$ as
\begin{equation}\label{eq:genpiericoeffs}
f \Ht_\nu = \sum_{\mu} d_{\mu\nu}^f \Ht_\mu \hskip .3in\hbox{ and }\hskip .3in
f^\perp \Ht_\mu = \sum_{\nu} c_{\mu\nu}^{f\perp} \Ht_\nu~.
\end{equation}
These coefficients are related by
\begin{equation}\label{eq:genPiericoefrel}
c_{\mu\nu}^{f\perp} w_\nu = d_{\mu\nu}^{\omega f^\ast} w_\mu~.
\end{equation}
The summation formula from \cite{GarHag02} (see pp. 698-701) we will use here is
\begin{equation}\label{eq:genPierisums}
\sum_{\substack{\nu \subseteq \mu\\m-d\leq |\nu|\leq m}}
c_{\mu\nu}^{g \perp} = \nabla^{-1}\left( (\omega g)\!\! \left[ \frac{X- \epsilon}{M} \right] \right) 
\Big|_{X \rightarrow D_\mu}
\end{equation}
where $\mu \vdash m$ and  $g$ is a symmetric function of degree less than or equal to $d$.
\end{subsection}
\begin{subsection}{Symmetric functions indexed by compositions and creation operators}\label{sec:compsf}
The work of Haglund, Morse and the author \cite{HMZ12} extended the Shuffle Conjecture
to a compositional refinement.
The Compositional Shuffle Conjecture implies the original Shuffle Conjecture, and it was this
version of the conjecture that was proven in \cite{CM15}.

The compositional refinement came by defining for each composition
$\alpha$ symmetric functions $B_\alpha[X;q]$
and $C_\alpha[X;q]$.  These symmetric functions have the property that
the combinatorial expression in terms of labeled Dyck paths 
for $\nabla B_\alpha[X;q]$ is in terms of paths which touch in {\it at least} in the positions
specified by the composition $\alpha$ and $\nabla C_\alpha[X;q]$ which touches the
diagonal in {\it exactly} the positions specified by the composition $\alpha$.

Both of these symmetric functions are defined in terms of creation operators.
For any
symmetric function $P[X]$ define
\begin{align}\label{eq:bopdef}
\BB_m P[X] = P\!\!\left[X + \epsilon \frac{(1-q)}{u}\right]\Omega[-\epsilon uX]\Big|_{u^m}
\end{align}
and
\begin{align}\label{eq:copdef}
\BC_m P[X] 
= (-q) P\!\!\left[X + \epsilon\frac{(1-q)}{u}\right]\Omega\!\!\left[\frac{\epsilon uX}{q}\right]\Big|_{u^m}~.
\end{align}
Then for any composition $\alpha = (\alpha_1, \alpha_2, \ldots, \alpha_{\ell(\alpha)})$, set
$C_\alpha = C_\alpha[X;q] = \BC_{\alpha_1} \BC_{\alpha_2} \ldots \BC_{\alpha_{\ell(\alpha)}}(1)$.
We can define the symmetric functions $B_\alpha$ in a similar manner, but for our
purposes, we only need (\cite{HMZ12} equation (5.11) and (5.12)) the $C_\alpha$ symmetric functions and
the fact that for $m \geq 0$, 
\begin{equation}\label{eq:BCrelation}
\BB_m (C_\alpha) = q^{\ell(\alpha)} \sum_{\beta \models m} C_{\alpha,\beta}~.
\end{equation}

We will denote $\BB_m^\ast$ and $\BC_m^\ast$ operators which are dual
to $\BB_m$ and $\BC_m$ with respect to the $\ast$-scalar product
(that is $\left< \BB_m f, g \right>_\ast = \left< f, \BB_m^\ast g \right>_\ast$).
Since $\left< e_n^\ast[XY], f[X] \right>_\ast =f[Y]$, 
then for any symmetric function $f[X]$,
$$\left< \BB_m^X e_n^\ast[XY], f[Y] \right>_\ast = \BB_m f[X] = 
\left<e_n^\ast[XY], \BB_m^{Y} f[Y] \right>_\ast =
\left<\BB_m^{\ast Y} e_n^\ast[XY], f[Y] \right>_\ast$$
where we have denoted $\BB_m^X$ to be the $\BB_m$ operator acting on the $X$ variables and
$\BB_m^{\ast Y}$ to be the $\ast$-dual operator acting on the $Y$ variables.  We conclude
that the following two expressions 
be verified by calculating that $\BB_m^X e_n^\ast[XY] = \BB_m^{\ast Y} e_n^\ast[XY]$
(and similarly $\BC_m^X e_n^\ast[XY] = \BC_m^{\ast Y} e_n^\ast[XY]$).
These operators are expressed by the formulae, 
\begin{equation}\label{eq:Bopsaction}
\BB_m^* P[X] = P\!\!\left[X+\frac{M}{u}  \right]\Omega\!\!\left[\frac{-uX}{1-t} \right]
\Big|_{u^{-m}}
\end{equation}
and
\begin{align}
\BC_m^* P[X]
&= (-q)
P\!\!\left[X-\frac{ M}{qu}  \right]
\Omega\!\!\left[\frac{- u X}{1-t} \right]
\Big|_{u^{-m}}~.\label{eq:Copsaction}
\end{align}
\end{subsection}
\end{section}

\begin{section}{The combinatorial recurrence} \label{sec:comb}
In \cite{HRW15} there are two combinatorial interpretations stated in Conjecture 7.1 for
the symmetric function expression $\left< \Delta_{h_\ell} \nabla e_{n-\ell}, s_{k+1,1^{n-\ell-k-1}} \right>$
and only one of these two seems to be compatible with the
coefficient in the expression
$\left< \Delta_{h_\ell} \nabla C_\alpha, s_{k+1,1^{n-\ell-k-1}} \right>$
(where $\alpha$ is a composition and $k,\ell \geq 0$)
in terms of decorated Dyck paths.

What we will do in the beginning of this section is to introduce the definitions
necessary to state the combinatorial interpretation. In Section \ref{sec:combrec} we
state and prove
a recurrence on the generating function for the combinatorial
objects.  Then in Section \ref{sec:sfrec} we will show a symmetric
function identity that demonstrates the coefficients also satisfy the same
recurrence.  This will imply by an inductive argument (because for small
values of the indices we can verify that the combinatorial values agree
with the symmetric function coefficients) that the symmetric function coefficients agree
with the combinatorial generating function.

The basic element of the recursive construction given to us by the symmetric function
recurrence is a rotation of the first part around to the end of the Dyck path while
deleting the first up step and the first right step that touches the diagonal.  This
combinatorial recurrence and the effect on the $\area$ and $\dinv$ statistics
first appears in \cite{Hic10}.
Assuming that we know the size of the piece that is being rotated around, the process
is reversible.  This is exactly the same sort of recursive construction that appeared
in the proofs of certain coefficients of the Compositional Shuffle Conjecture \cite{GXZ10, GXZ14a, GXZ14b}.

\begin{figure}[h] 
\begin{center}
\begin{tabular}{p{0.5\textwidth} p{0.5\textwidth}}
\vspace{0pt}\includegraphics[width=2.3in]{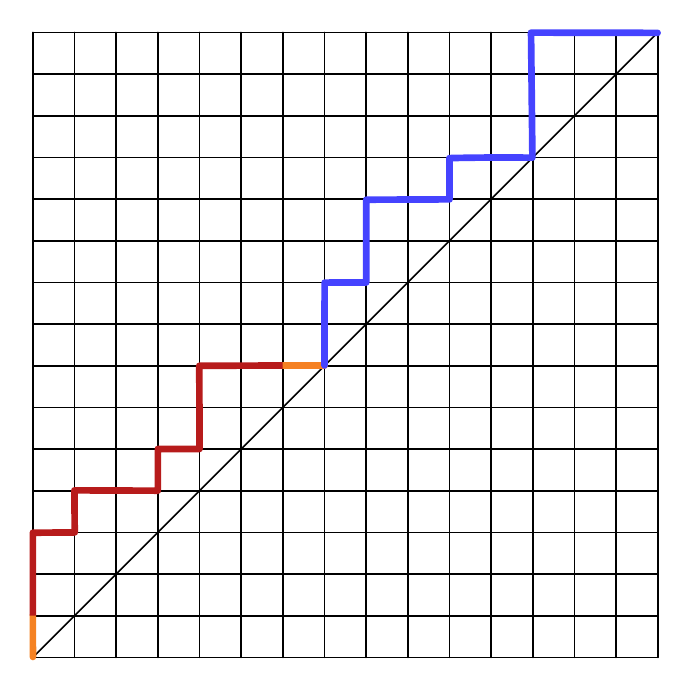}&
\vspace{3pt}\includegraphics[width=2.2in]{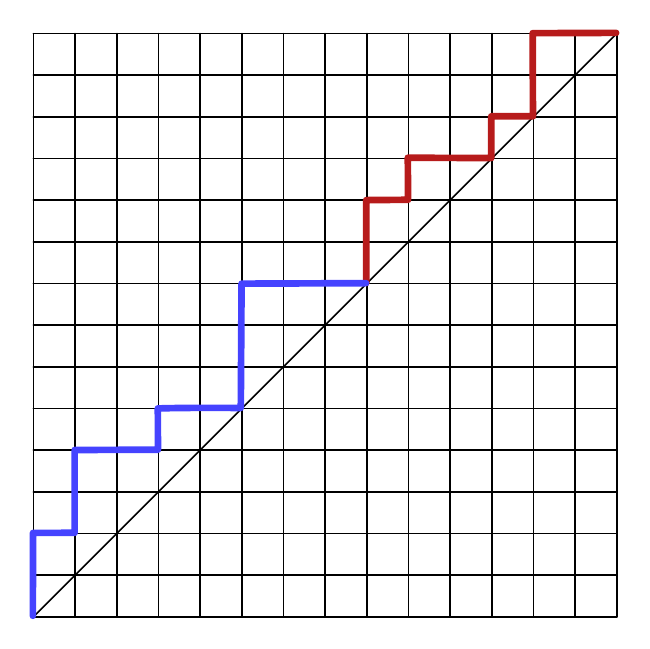}
\end{tabular}
\end{center}
\begin{caption}{Example of cyclic rotation} \label{fig:exampleDP}
The Dyck path on the left first touches the diagonal after 7 vertical/horizontal steps and has $\area(D) = 17$.
The Dyck path on the right has the red part of the path moved to the end with the orange segments removed.  The resulting
path has area $11 = 17 - (7-1).$
\end{caption}
\end{figure}

Vertical edges in a Dyck path
come in two types, a {\it peak} is a vertical edge followed 
by a horizontal edge, a {\it double rise} is a
a vertical edge followed by a second vertical edge.
A Schr\"oder path is a Dyck path with some of the peaks
changed to $NE$-diagonal edges.  The Schr\"oder paths we
will work with will have the restriction that the rightmost peak in the
highest diagonal cannot be a $NE$-diagonal edge.  A decorated Schr\"oder path
will be a Schr\"oder path with some of the vertical edges which are not
peaks decorated with a $\circ$.
Since a $\circ$-decorated Schr\"oder path is just a Dyck path with some of the vertical edges
either decorated or diagonal, we will explain the recurrence on the Dyck path
and assume that the decorations travel with the edges in the recurrence.
In this way we will identify a decorated Schr\"oder path with its underlying Dyck path and we
will partly do this by calling the path $P$ and the underlying Dyck path $D$.

Dyck paths may be encoded by the area sequence $(a_1(D), a_2(D), \ldots, a_n(D))$ where 
$a_i(D)$ is the number of full cells in the the $i^{th}$ row which are above the diagonal but below the
Dyck path.  The area sequence of the
Dyck paths are characterized by the property that $a_1(D)=0$ and
$0 \leq a_{i+1}(D) \leq a_{i}(D) +1$ for $1 \leq i < n$.
The area statistic on Dyck paths is $\area(D) = \sum_{i=1}^n
a_i(D)$.

The diagonal inversion statistic on Dyck paths is the number of pairs $(i,j)$ with $i<j$ such that
either $a_i(D) = a_j(D)$ or $a_i(D)=a_j(D)+1$ (the diagonal inversions of the Dyck path).
Order the
rows from largest area value to smallest and from right to left.  The $i^{th}$ row in this order
has area $a_k(D)$ for some $k$ and let $b_i(D)$ be the
number of diagonal inversions of the form $(k,j)$ with $a_k(D)=a_j(D)$ or $(j,k)$ with $a_j(D) = a_k(D)+1$.
The $b_i(D)$ represents the number of diagonal inversions between the $i^{th}$ vertical step in this
order and all those that come before.
Set $\dinv(D) = \sum_{i=1}^n b_i(D)$.

\begin{example}
To ensure that that the definitions are clear to this point we list the sequences for the Dyck paths
pictured in Figure \ref{fig:exampleDP}.  The $a$-sequence is
$$(0,1,2,2,1,1,2,0,1,1,2,1,0,1,2),$$
and the $b$-sequence is $$(0, 1, 2, 3, 4, 4, 5, 5, 6, 6, 7, 6, 6, 4, 2)~.$$
The $a$ and $b$ sequences of the right Dyck path have a clear relationship to the left Dyck path.
The statistics for the right path have the $a$-sequence  given by
$$(0,1,1,2,1,0,1,2,0,1,1,0,0,1),$$
and the $b$-sequence is $$(0, 1, 2, 3, 4, 4, 5, 5, 6, 6, 7, 6, 6, 4)$$
found by deleting the last entry.
\end{example}

The combinatorial interpretation of the symmetric function coefficients
are in terms of decorated Schr\"oder paths.
The set of $\circ$-decorated Schr\"oder paths are Dyck paths where
each peak except the rightmost one in the highest diagonal can be
replaced with a $NE$-diagonal step and each
vertical segment that is not a peak can be labeled with 
a $\circ$ or not.  Denote the set of
$\circ$-decorated Schr\"oder paths in an $n \times n$ square by $\Sch_n^\circ$
and the set of Dyck paths of the same size by $\DD_n$.  The cardinality of 
$\Sch_n^\circ$ is equal to $\frac{2^{n-1}}{n+1} \binom{2n}{n}$.
This enumeration follows because for each Dyck path in $\DD_n$ there are $2^{n-1}$ corresponding 
Schr\"oder paths since each of the $n-1$ vertical edges of the Dyck path
(not the rightmost highest peak)
has two choices as being either labeled/diagonal or not labeled/diagonal.

A {\it $\circ$-decorated rise} on a $\circ$-decorated
Schr\"oder path is a row where the first vertical segment
of two consecutive vertical segments has a $\circ$-decoration
(i.e. a $\circ$-decorated row with $a_{i}(D) = a_{i+1}(D)-1$).
Let $\Rise_\circ(P)$ be the set of indices of
the rows of the $\circ$-decorated rises and
$\rise_\circ(P) = |\Rise_\circ(P)|$ be the number of $\circ$-decorated
rises.
We will also set for $\circ$-decorated Schr\"oder paths,
$\area_\circ(P) = \area(D) - \sum_{i \in \Rise_\circ(P)} a_{i+1}(D) = 
\area(D) - \rise_\circ(D) - \sum_{i \in \Rise_\circ(D)} a_{i}(D)$.

There is a reading order of
the vertical segments of a Dyck path which are ordered by
reading them from the highest diagonal from right to left.
We will denote the set of indices of the diagonal edges in a corresponding Schr\"oder path $P$
(following the reading order) by $\Dia_\circ(P)$ and
the number of $\circ$-decorated peaks by $\dia_\circ(P) = |\Dia_\circ(P)|$.

We note that because we restricted in our $\circ$-decorated Schr\"oder paths
that the rightmost peak in the highest diagonal
cannot be a $NE$-diagonal edge, $1 \notin \Dia_\circ(P)$.
Also remark that rows that form a peak in a Dyck path will have $b_i(D) > b_{i-1}(D)$
(except where $b_1(D)=0$).
A peak of a Dyck path will have a diagonal inversion with all the same positions that the
preceding vertical segment had, plus one with that previous vertical segment.

Denote a restricted diagonal inversion statistic on $\circ$-decorated Schr\"oder paths
$$\dinv_\circ(P) = \dinv(D) - \sum_{i \in \Dia_\circ(P)} b_i(D)~.$$

Define the following multivariate analogues of $\frac{2^{n-1}}{n+1} \binom{2n}{n}$,
\begin{align}
\Catp_n(q,t,z,w) &= \sum_{D \in \DD_n} q^{\dinv(D)} t^{\area(D)} \prod_{b_i(D)>b_{i-1}(D)}
\left( 1 + z/q^{b_i(D)} \right)\\
&\hskip .2in\times \prod_{a_i(D)>a_{i-1}(D)} (1 + w/t^{a_i(D)})\\
&= \sum_{P\in \Sch_n^\circ } q^{\dinv_\circ(P)} t^{\area_\circ(P)} z^{\dia_\circ(P)} w^{\rise_\circ(P)}~.
\end{align}


There is a compositional refinement that we are focusing on in this paper.
For a $\circ$-decorated Dyck path $D$ with $\rise_\circ(D) = \ell$,
define the rise-touch composition of $D$
to be the sequence of numbers of
vertical steps which are not $\circ$-decorated rises
between the places the path touches the diagonal.
This is more simply determined by looking at the
usual touch composition for the Dyck path without the decorations
and then subtracting from each part of the composition the number of $\circ$-decorated rises in each segment.
Where it is necessary to abbreviate, the rise-touch composition will be denoted $\touch(D)$.
It is the case that $\touch(D)$ is a composition of $n - \ell$.

%

\begin{example}
To ensure that the combinatorial object and the definitions 
that we are considering are clear, consider the Schr\"oder path that appears
in Figure \ref{fig:sample}.  The Schr\"oder path $P$ has an underlying Dyck path $D$
and the 
$a$-sequence for this path is $(0,1,1,2,$ $0,1,1,0,$ $0,1,2,2,$ $3,3,4,2)$ and the
 $b$-sequence for this path is $(0, 0, 1, 2, 1, 2, 3, 1, 2, 3, 3, 4, 4, 5, 4, 3)$.
The $area(D) = 23$ and the so the $area_\circ(P) = 23 - (1+2+1+3+4) = 12$.
The $dinv(D) = 38$ and the peaks corresponding to the indices $3, 7, 9$ in the
reading order are $NE$-diagonal edges and so $dinv_\circ(P) = 38 - (1+3+2) = 32$.
\end{example}

We next define a generating function for the $\circ$-decorated
Dyck paths such that the rise-touch composition is equal to $\alpha$.
Fix non-negative integers $k$ and $\ell$ and a composition $\alpha$.
Then set
\begin{equation}
\Catp_{\alpha,k,\ell}(q,t) = \sum_{\substack{P \in \Sch^\circ_{|\alpha|+\ell}\\ \dia_\circ(P)=k\\
\touch_\circ(P) = \alpha}}
q^{\dinv_\circ(P)} t^{\area_\circ(P)}
\end{equation}
By definition,
\begin{equation}
\Catp_n(q,t,z,w) = \sum_{k,\ell: k+\ell \leq n-1} 
\sum_{\alpha \models n-\ell} \Catp_{\alpha,k,\ell}(q,t) z^k w^\ell~.
\end{equation}
this is because if there are $\ell$ rises which are $\circ$-decorated, then the
rise-touch composition of the $\circ$-decorated Dyck path will be some composition $\alpha$ of
$n-\ell$ and so both the left and right hand side of the equation
is equal to a weighted sum over all $\circ$-decorated Dyck paths of size $n$.

%

\begin{example} The following $8$ Schr\"oder paths have $touch_\circ(P) = (2,2)$.
The third path in each row has $\area_\circ(P) = 3$, the others have $\area_\circ(P)=2$.

\begin{center}
\includegraphics[width=4in]{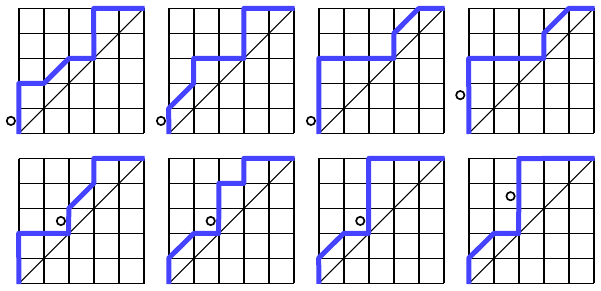}
\end{center}
The first two of these paths have $b$-sequence $(0, 1, 2, 2, 1)$,
the second two have a $b$-sequence $(0, 1, 1, 1, 1)$.  In the bottom row
the first two paths have $b$-sequence $(0, 1, 2, 1, 1)$ and the last two
have $b$-sequence $(0, 0, 1, 1, 1)$.  As a consequence the $\dinv_\circ(P)$
for the paths are $5, 4, 3, 3$ for the top row and $4, 3, 2, 2$ for the bottom
row.

The generating function for these objects is
\begin{equation}
\Catp_{(2,2),1,1}(q,t) = 
q^{5} t^{2} + q^{4} t^{2} + q^{3} t^{3} +q^{3} t^{2} + q^{4} t^{2} + q^{3} t^{2} + q^{2} t^{3}
+ q^{2} t^{2}~.
\end{equation}
\end{example}

\begin{subsection}{Combinatorial recurrence on $\Catp_{\beta,k,\ell}(q,t)$} \label{sec:combrec}

We will show in this subsection that for non-negative integers $k, \ell$ such that
$k < |\alpha|$ that $\Catp_{\alpha,k,\ell}(q,t)$ satisfies a recurrence involving the first
part of the partition.
We consider two cases, one where the first part of the composition $\alpha$ is greater than $1$
and a second where $\alpha_1 = 1$.
\begin{prop} \label{prop:Catrec1} For $a \geq 1$, and $k,\ell \geq 0$, and a fixed composition $\beta$
we have the combinatorial recurrence,
\begin{align}\label{eq:firststepbig}
\Catp_{(a+1,\beta),k,\ell}(q,t)
= &t^{a} q^{\ell(\beta)} \sum_{\ga \models a} \Catp_{(\beta,\ga),k,\ell}(q,t)
+ t^{a} q^{\ell(\beta)} \sum_{\ga \models a+1} \Catp_{(\beta,\ga),k,\ell-1}(q,t)~.
\end{align}
\end{prop}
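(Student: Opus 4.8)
The plan is to establish a weight-preserving bijection implementing the cyclic rotation mentioned at the start of this section. First I would fix a $\circ$-decorated Dyck path $D$ counted by the left-hand side, so $\alpha^{\Rise}(D) = (a+1,\beta)$, $\peak_\circ(D) = k$, $\rise_\circ(D) = \ell$, and I would write its initial arch (the part before the first return to the diagonal) as rows $1,2,\ldots,p$, so that $a_1(D)=0$, $a_i(D)\ge 1$ for $2\le i\le p$, and $a_{p+1}(D)=0$. Because the first part $a+1$ of the rise-touch composition is at least $2$, the initial arch has at least two vertical steps, hence $a_2(D)=1$; consequently row $1$ is a double rise, is never a peak, and (having area $0<1\le\max_i a_i(D)$) is never the rightmost vertical step of maximal area, which is always a peak. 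So the decoration of row $1$ is an unconstrained binary choice, and I would split the count of these $D$ into the case where row $1$ carries a $\circ$ and the case where it does not.

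Next I would form $D'$ by the rotation of Hicks~\cite{Hic10}: delete row $1$, replace $a_i(D)$ by $a_i(D)-1$ for $2\le i\le p$, and move these rows to the end, after the unchanged rows $p+1,\ldots,n$. Knowing $\beta$, this is invertible: the first $\ell(\beta)$ arches of $D'$ are exactly the part of $D$ after its initial arch, and raising the remaining arches of $D'$ by one cell merges them into a single arch which, with a new bottom row $a=0$ prepended (decorated or not, according to the case), is the initial arch of $D$. From the way the area sequences and reading orders transform — which is the content of \cite{Hic10} and is also checked directly from the definitions — one has $\area(D)-\area(D')=p-1$, the reading order of $D'$ is that of $D$ with row $1$ (always the final entry, as it is the leftmost row of minimal area) deleted, and corresponding vertical steps of $D$ and $D'$ carry equal $b$-values. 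In particular $\dinv(D)-\dinv(D')=b_n(D)$, and since $b_n(D)$ counts the rows $j\ge 2$ with $a_j(D)=0$, i.e. the returns of $D$ to the diagonal strictly after the initial arch, and the path after that arch has exactly $\ell(\beta)$ arches, $b_n(D)=\ell(\beta)$.

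Then I would track the decoration through the rotation. Lowering a block of rows by one cell preserves the local conditions $a_{i+1}=a_i+1$ and $a_{i+1}\le a_i$, so every surviving vertical step keeps its peak/double-rise type; a short case analysis on whether $\max_i a_i(D)$ is attained inside the initial arch shows the rotation carries the rightmost maximal vertical step of $D$ to that of $D'$, so the forbidden (undecoratable) step is respected and the decoration of $D$ transports to a legal decoration of $D'$. Hence the $\circ$-decorated peaks of $D$ and $D'$ are in bijection with equal $b$-values, so $\peak_\circ(D')=\peak_\circ(D)=k$ and $\sum_{i\in\Peak_\circ(D)}b_i(D)=\sum_{i\in\Peak_\circ(D')}b_i(D')$, whence $\dinv_\circ(D)-\dinv_\circ(D')=\dinv(D)-\dinv(D')=\ell(\beta)$. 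The $\circ$-decorated double rises also correspond, the sole exception being row $1$: if it is decorated then $\rise_\circ(D')=\ell-1$ and the rotated arch inherits rise-touch content $a+1$, so $D'$ has rise-touch composition $(\beta,\gamma)$ with $\gamma\models a+1$; if it is not, then $\rise_\circ(D')=\ell$ and the rotated arch has rise-touch content $a$, so $\gamma\models a$. A short telescoping computation, using that the initial arch has $p$ vertical steps of which $p-(a+1)$ are $\circ$-decorated double rises together with $\area_\circ=\area-\sum_{i\in\Rise_\circ}a_{i+1}$, gives $\area_\circ(D)-\area_\circ(D')=a$ in both cases.

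Putting this together, $D\mapsto D'$ restricts to a bijection from the $\circ$-decorated paths with row $1$ undecorated onto $\bigsqcup_{\gamma\models a}\{D':\alpha^{\Rise}(D')=(\beta,\gamma),\ \peak_\circ(D')=k,\ \rise_\circ(D')=\ell\}$, and from those with row $1$ decorated onto $\bigsqcup_{\gamma\models a+1}\{D':\alpha^{\Rise}(D')=(\beta,\gamma),\ \peak_\circ(D')=k,\ \rise_\circ(D')=\ell-1\}$. Since $q^{\dinv_\circ(D)}t^{\area_\circ(D)}=q^{\ell(\beta)}t^{a}\cdot q^{\dinv_\circ(D')}t^{\area_\circ(D')}$, summing the two cases reproduces exactly the two terms of \eqref{eq:firststepbig}. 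The step I expect to cost the most care is the decoration bookkeeping of the third paragraph — above all, confirming that the forbidden rightmost-maximal peak is transported correctly even when the global maximum of the area sequence lies inside the initial arch and gets lowered by one, and that the reading orders agree finely enough that the $b$-values attached to $\circ$-decorated peaks are preserved; everything else is either the undecorated rotation of \cite{Hic10} or routine verification.
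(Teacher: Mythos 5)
Your proposal is correct and follows essentially the same route as the paper: split the paths according to whether the first vertical step carries a $\circ$, apply the cyclic rotation moving the first arch to the end while deleting the first up step and the last right step of that arch, and track the effect on $\area_\circ$, $\dinv_\circ$, the rise-touch composition, and the decorations. Your treatment of the reading-order transport and of the forbidden (undecoratable) rightmost-maximal peak is somewhat more explicit than the paper's, but the decomposition, the bijection, and the bookkeeping coincide with the paper's proof.
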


\begin{proof} To prove equation \eqref{eq:firststepbig}, we will divide the set of
$\circ$-decorated Dyck paths into two subsets: either the first vertical step of the $\circ$-decorated
Dyck path is decorated ({\bf Case 1}) or it is not decorated ({\bf Case 2}).
On both sets we will perform a cyclic rotation
as described at the beginning of this section.  We will refer to the piece of the Dyck path
up to the first point that it touches the diagonal as {\it first} (the part pictured
in red in Figure \ref{fig:exampleDP}) and the piece of the Dyck path after the first part as {\it rest}
(the part pictured in blue in Figure \ref{fig:exampleDP}).

The circle decorations travel with the vertical edges of the path
and the vertical edge which is deleted may either be decorated or not.
The sequence of $b$-values for the cyclically rotated path is exactly the same as the
$b$-sequence for the original path except the last entry (which corresponds to the deleted edge)
is deleted.  This is because
diagonal inversions within the first piece or within the rest piece are still diagonal inversions after
the cyclic rotation and diagonal inversions between the first piece and the rest piece will 
switch from being on the same diagonal to being on the diagonal below (and vice versa).

We also remark that the rightmost peak in the highest diagonal
in the path before rotation will remain the rightmost
peak in the highest diagonal in the path after the operation.
Therefore the condition that the rightmost peak in the highest diagonal is not a NE edge is
a condition which must hold and come from the base case.

\vskip .1in
\noindent {\bf Case 1}: Cyclic rotation gives a bijection between $\circ$-decorated Schr\"oder paths $P$
with rise-touch composition $(a+1,\beta)$ and $\dia_\circ(P)=k$ 
and $\rise_\circ(P)=\ell$ 
where {\bf the first vertical step is not decorated} and 
$\circ$-decorated Schr\"oder paths $P'$
with rise-touch composition of the form $(\beta,\gamma)$ with $\gamma \models a$ 
and $\dia_\circ(P')=k$ 
and $\dia_\circ(P')=\ell$. 

Say that there are $r$ $\circ$-decorated rises in the first piece.
Since the first vertical step is not decorated, after
deleting the first edge from a piece of a Dyck path which touches the diagonal after $a+r+1$ steps,
the rise-touch composition will be a composition $\gamma$ of size $a$ and deleting the first
vertical step has the effect of removing $a$ cells contributing to the area (one for each 
peak or non-decorated rise
row in the first piece of the Dyck path),
so $\area_\circ(D) = \area_\circ(D') + a$.
Moreover since there is one diagonal inversion of the form $(1,j)$ for each $j>1$ with $a_j(D)=0$, 
then $b_n(D) = \ell(\beta)$ (where $n$ is equal to the length of $D$) 
and $\dinv_\circ(D) = \dinv_\circ(D') + \ell(\beta)$.

\begin{example} \label{ex:singletouchunlabeled} Consider the first part of the Dyck path
being the following piece of length 8.  The first part of the rise-touch composition of this
Dyck path will be $8$ minus the number of rises which are $\circ$-decorated in this piece (in this case there
are $2$).
\begin{center}
\includegraphics[width=1.5in]{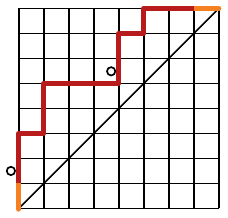}
\end{center}
We consider this example to see how the rise-touch composition and $\circ$-area is affected by deleting the first vertical
step and last horizontal step and ensure that the definitions are clear to this point.
Since the $\circ$-decorations are in rows $2$ and $6$, the $\circ$-area contribution 
from this first piece is $9$ and length of the first entry of the rise-touch
composition is $6$.  Deleting the first vertical and last horizontal step then the rise-touch composition
will have contribition $(3,2)$ and the $\circ$-area statistic will contribute $4 = 9-(6-1)$ from this piece.
\end{example}

\vskip .1in
\noindent {\bf Case 2}: Cyclic rotation also gives a bijection between $\circ$-decorated Schr\"oder paths
$P$ with rise-touch composition $(a+1,\beta)$ and $\dia_\circ(P)=k$ 
and $\rise_\circ(P)=\ell$ 
where {\bf the first vertical step is decorated} and $\circ$-decorated Dyck paths $P'$
with rise-touch composition $(\beta,\gamma)$ with $\gamma \models a+1$ 
and $\dia_\circ(P')=k$ 
and $\rise_\circ(P')=\ell-1$.  Deleting the $\circ$-decoration from the first vertical step will reduce the
number of $\circ$-decorations by one.

Say that there are $r$ $\circ$-decorated rises in the first piece and the Dyck path first touches
the diagonal after $a+1+r$ steps.
Notice that the rise-touch composition of the first piece of the path after deleting the first step
will be a composition $\gamma$ of size $a+1$.  Deleting the first
vertical step has the effect of removing $a$ cells contributing to the area (one for each 
peak or non-decorated rise
row in the first piece of the Dyck path),
so $\area_\circ(D) = \area_\circ(D') + a$.
Moreover since there is one diagonal inversion of the form $(1,j)$ for each $j>1$ with $a_j(D)=0$, 
then $b_n(D) = \ell(\beta)$ and $\dinv_\circ(D) = \dinv_\circ(D') + \ell(\beta)$.

\begin{example}\label{ex:singletouchlabeled}
Consider the same first piece of the Dyck path as in Example \ref{ex:singletouchunlabeled} but assume
now that rows $1, 2$ and $6$ are labelled.  The first entry of the rise-touch
composition will be $5$ which is the same size as the resulting rise-touch composition of $(3,2)$ when
we delete the first vertical and last horizontal step.
\begin{center}
\includegraphics[width=1.5in]{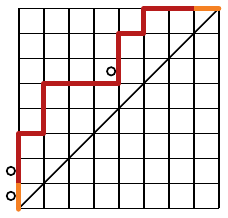}
\end{center}
The $\circ$-area of the contribution from the first piece is $8$ before deleting the first
vertical and last horizontal step and it is $4$ after.
\end{example}

The two cases are disjoint and together cover all $\circ$-decorated Dyck paths
and the weights agree between those on the left
and right hand side, so equation \eqref{eq:firststepbig} holds.
\end{proof}

Next we consider the case where $\alpha = (1,\beta)$ (that is, the $a=0$ case) and we see that the
recurrence is slightly different.

\begin{prop} \label{prop:Catrec2} For $k,\ell \geq 0$, and a composition $\beta$
we have the combinatorial recurrence,
\begin{align}\label{eq:firststepsmall}
\Catp_{(1,\beta),k,\ell}(q,t)
= &q^{\ell(\beta)} \Catp_{\beta,k,\ell}(q,t)+ \Catp_{\beta,k-1,\ell}(q,t)
+ q^{\ell(\beta)} \Catp_{(\beta,1),k,\ell-1}(q,t)~.
\end{align}
\end{prop}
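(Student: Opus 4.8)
The plan is to run the same argument as in the proof of Proposition \ref{prop:Catrec1} with the first part $a+1$ specialized to $1$: partition the $\circ$-decorated Dyck paths $D$ of size $|\beta|+1+\ell$ with $\peak_\circ(D)=k$, $\rise_\circ(D)=\ell$ and $\alpha^{\Rise}(D)=(1,\beta)$ according to the shape of the piece \emph{first} (the portion of $D$ up to its first return to the diagonal), and check that cyclic rotation sends each class bijectively onto the objects counted by one of the three terms on the right, tracking the weights.

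The first thing to pin down is the rigid structure forced by the hypothesis that the leading part of $\alpha^{\Rise}(D)$ equals $1$. The contribution of \emph{first} to the rise-touch composition is (number of peaks in \emph{first}) $+$ (number of non-$\circ$-decorated double rises in \emph{first}); since the last vertical step of any primitive piece is a peak, demanding that this sum equal $1$ forces \emph{first} to be a run of $\circ$-decorated double rises followed by a single peak, i.e.\ a block of $m$ consecutive vertical steps followed by $m$ consecutive horizontal steps, with rows $1,\dots,m-1$ $\circ$-decorated double rises and row $m$ a peak. This gives a three-way split: \textbf{(A)} $m=1$ and row $1$ is an undecorated peak; \textbf{(B)} $m=1$ and row $1$ is a $\circ$-decorated peak; \textbf{(C)} $m\ge 2$, so that row $1$ is a $\circ$-decorated double rise.

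Cases (A) and (C) are the $a=0$ instances of Cases 1 and 2 of Proposition \ref{prop:Catrec1}, handled verbatim. In (A), cyclic rotation deletes the undecorated peak and produces $D'$ with $\alpha^{\Rise}(D')=\beta$, $\peak_\circ(D')=k$, $\rise_\circ(D')=\ell$, with $\area_\circ$ unchanged (the $a=0$ case of the area computation there) and $\dinv_\circ(D)=\dinv_\circ(D')+\ell(\beta)$ since $b_n(D)=\ell(\beta)$, contributing $q^{\ell(\beta)}\Catbar_{\beta,k,\ell}^{\Rise}(q,t)$. In (C), cyclic rotation deletes the row-$1$ $\circ$-decorated double rise, the leftover block of $m-1$ vertical steps followed by $m-1$ horizontal steps is again a single primitive piece whose rise-touch contribution is $1$, so $D'$ has $\alpha^{\Rise}(D')=(\beta,1)$, $\peak_\circ(D')=k$, $\rise_\circ(D')=\ell-1$, $\area_\circ$ unchanged, and again $\dinv_\circ(D)=\dinv_\circ(D')+\ell(\beta)$, contributing $q^{\ell(\beta)}\Catbar_{(\beta,1),k,\ell-1}^{\Rise}(q,t)$. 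The genuinely new case is (B): deleting the row-$1$ $\circ$-decorated peak yields $D'$ with $\alpha^{\Rise}(D')=\beta$, $\rise_\circ(D')=\ell$, and $\peak_\circ(D')=k-1$; the point — and the reason no power of $q$ multiplies the middle term — is that the deleted peak is the last vertical step in the reading order, so the quantity $b_n(D)=\ell(\beta)$ that its removal takes off of $\dinv$ is exactly the quantity it was subtracting as a member of $\Peak_\circ(D)$, and the two cancel, leaving $\dinv_\circ(D)=\dinv_\circ(D')$; hence this case contributes $\Catbar_{\beta,k-1,\ell}^{\Rise}(q,t)$. Each of the three maps is reversible once one knows the size of the rotated block — a single peak in (A) and (B), and in (C) the last primitive piece of $D'$ with a $\circ$-decorated double rise prepended — the three classes are disjoint and together exhaust the left side, and degenerate indices ($\ell=0$ in (C), $k=0$ in (B)) make both sides vanish, so adding the three contributions gives \eqref{eq:firststepsmall}.

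I expect the main obstacle to be precisely the $\dinv_\circ$ bookkeeping in case (B): one must verify that deleting the row-$1$ $\circ$-decorated peak changes neither $\dinv$ nor the $\Peak_\circ$-correction of any surviving row (both hold because that row is read last and its removal disturbs neither the relative reading order nor the diagonal-inversion pairs among the other rows), so that the net change in $\dinv_\circ$ is $0$ rather than $\ell(\beta)$. This asymmetry with cases (A) and (C) is the only place where the computation differs from Proposition \ref{prop:Catrec1}, and it is what produces the $q^{0}$ coefficient of $\Catbar_{\beta,k-1,\ell}^{\Rise}$ in \eqref{eq:firststepsmall}.
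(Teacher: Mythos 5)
Your proof is correct and follows the same three-case decomposition used in the paper: a single undecorated peak, a single $\circ$-decorated peak, and a block of $\circ$-decorated double rises capped by a peak (your $m\ge 2$ case, the paper's Case 3), with the same cyclic-rotation bijections and weight bookkeeping. Your explicit account of the cancellation in case (B) — that $b_n(D)=\ell(\beta)$ is deducted from both $\dinv$ and the $\Peak_\circ$-correction because the deleted row is last in reading order — is a correct elaboration of what the paper states more tersely.
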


\begin{proof}
In the case that the first part of the rise-touch composition is $1$, there are three types of
$\circ$-decorated Dyck paths which contribute to this expression:
the Dyck path starts with a non-decorated vertical step
followed by a horizontal step ({\bf Case 1}), it begins with a $\circ$-decorated vertical step
followed by a horizontal step ({\bf Case 2}) or it begins with some number of $\circ$-decorated
rises followed by a peak followed by the same number of horizontal steps ({\bf Case 3}) (in the picture
below, the example representing this case has two $\circ$-decorated rises,
but in general there are potentially between $1$ and $\ell$ $\circ$-decorated rises
and the peak can potentially be a $NE$-diagonal step).

\begin{center}
\includegraphics[width=1.5in]{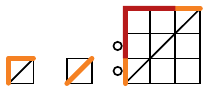}
\end{center}

\vskip .1in
\noindent {\bf Case 1}: The $\circ$-decorated Schr\"oder paths $P$ which begin with a non-decorated vertical step
followed by a horizontal step with rise-touch composition of the form $(1,\beta)$ and
$\dia_\circ(P)=k$
and $\rise_\circ(P)=\ell$ are in bijection with the $\circ$-decorated
Schr\"oder paths $P'$ with rise-touch composition
$\beta$ and $\dia_\circ(P)=k$
and $\rise_\circ(P)=\ell$ by removing the first vertical and horizontal steps.
Since there are $\ell(\beta)$ diagonal inversions of the form $(1,j)$ for each $j>1$ with
$a_j(D)=0$, then $\dinv_\circ(P) = \dinv_\circ(P')+\ell(\beta)$.  The area doesn't change
by deleting the first vertical and horizontal step so $\area_\circ(P) = \area_\circ(P')$.

\vskip .1in
\noindent {\bf Case 2}: The $\circ$-decorated Schr\"oder paths $P$
with rise-touch composition of the form $(1,\beta)$
which begin with a $NE$-diagonal step
and $\dia_\circ(P)=k$
and $\rise_\circ(P)=\ell$ are in bijection with the $\circ$-decorated Schr\"oder paths $P'$
with rise-touch composition
$\beta$, $\dia_\circ(P)=k-1$
and $\rise_\circ(P)=\ell$.  The bijection is simply to remove the first vertical and horizontal steps
(and hence one of the decorations on the peaks).
Since the first step of the path is a $NE$-diagonal, it does not
contribute to the $\circ$-$\dinv$ statistic and $\dinv_\circ(P) = \dinv_\circ(P')$.  Moreover the
$\circ$-area does not change by removing the first vertical and horizontal step
so $\area_\circ(P) = \area_\circ(P')$.

\vskip .1in
\noindent {\bf Case 3}: The $\circ$-decorated Schr\"oder paths
$P$ with rise-touch composition equal to $(1, \beta)$
which begin with a sequence of $\circ$-decorated rises,
followed by a peak or a $NE$-diagonal step followed by horizontal steps to return to the diagonal
with $\dia_\circ(P)=k$
and $\rise_\circ(P)=\ell$ are in bijection with the Dyck paths $P'$ with rise-touch composition
$(\beta,1)$, $\dia_\circ(P)=k$
and $\rise_\circ(P)=\ell-1$ by a cyclic rotation described at the beginning of this section.
We note that the $\circ$-area does not change with the cyclic rotation because the first row
is a $\circ$-decorated rise so $\area_\circ(P)=\area_\circ(P')$.  Since there is one diagonal
inversion of the form $(1,j)$ for each $j>1$ with $a_j(D)=0$,
then $\dinv_\circ(P) = \dinv_\circ(P')+\ell(\beta)$.

\vskip .1in
The three cases are disjoint, they cover all possible $\circ$-decorated Dyck paths with
rise-touch composition beginning with a $1$ and the weights on the left hand side of the equation agree
with those on the right hand side, hence equation \eqref{eq:firststepsmall} holds.
\end{proof}
\end{subsection}
\end{section}

\begin{section}{The symmetric function recurrence} \label{sec:sfrec}

In this section we will provide a proof of the following 
symmetric function identity that agrees with the combinatorial recurrence
on the generating function for $\circ$-decorated Schr\"oder paths.

\begin{theorem}\label{thm:mainrec} For $k\geq 0$, and for integers $d, r, m$,
\begin{align} \label{eq:mainrec}
\BC_{k+1}^\ast \Delta_{h_{d}}\! \nabla( e_r^\ast h_{m}^\ast )
= t^{k} \BB_{k}^\ast& \Delta_{h_d} \!\nabla( e_r^\ast h_{m-1}^\ast )
+ t^{k} \BB_{k+1}^\ast \Delta_{h_{d-1}}\! \nabla( e_r^\ast h_{m}^\ast )\\
&+ \chi(k=0) \Delta_{h_d} \!\nabla( e_{r-1}^\ast h_{m}^\ast )~.\label{eq:mainrec2}
\end{align}
\end{theorem}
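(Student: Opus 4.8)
The plan is to unwind both sides using the plethystic formulas for $\BC_{k+1}^\ast$ and $\BB_j^\ast$ from \eqref{eq:Copsaction} and \eqref{eq:Bopsaction}, reduce everything to an identity among coefficients extracted from a generating function in an auxiliary variable $u$, and then recognize that identity as a consequence of the Pieri/summation machinery in Section 2 (especially \eqref{eq:esDga}, \eqref{eq:hkDga}, \eqref{eq:genPierisums}, and reciprocity \eqref{eq:GHTreciprocity}). The natural first move is to expand $e_r^\ast h_m^\ast$ in the $\Ht_\mu$ basis via \eqref{eq:enexpansion}: writing $e_r^\ast h_m^\ast[X] = \sum_\mu \langle e_r^\ast h_m^\ast, \Ht_\mu\rangle_\ast \,\Ht_\mu[X]/w_\mu$, one has $\Delta_{h_d}\nabla$ acting diagonally as $h_d[B_\mu] T_\mu$. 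Thus each of the three terms in \eqref{eq:mainrec}–\eqref{eq:mainrec2} becomes a sum over partitions $\mu$ of an explicit $q,t$-rational coefficient times $\BC_{k+1}^\ast \Ht_\mu$ (resp. $\BB_j^\ast \Ht_\mu$). So the real content is a four-part identity: first compute $\BC_{k+1}^\ast \Ht_\mu$ and $\BB_j^\ast \Ht_\mu$ in closed form (these are Pieri-type expansions $\BB_j^\ast \Ht_\mu = \sum_{\nu\subseteq\mu} (\cdots) \Ht_\nu$ with coefficients built from the $c_{\mu\nu}^{f\perp}$ and the generalized coefficients $c_{\mu\nu}^{f\perp}$ of \eqref{eq:genpiericoeffs}), then match coefficients of each $\Ht_\nu$ on the two sides.

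Concretely, I would proceed as follows. (1) Using $\langle \Ht_\mu, \Ht_\mu\rangle_\ast = w_\mu$ and the adjointness $\langle \BC_{k+1} f, g\rangle_\ast = \langle f, \BC_{k+1}^\ast g\rangle_\ast$, reinterpret the whole statement as: for all test functions, the pairing of the left side with an arbitrary $C_\gamma$ (or $\Ht_\gamma$) equals the pairing of the right side. This lets me use \eqref{eq:BCrelation} to convert $\BC$-operator appearances into sums over compositions, which is exactly the shape that mirrors the combinatorial recurrences in Propositions \ref{prop:Catrec1}–\ref{prop:Catrec2}. (2) Apply the explicit plethystic operators to $\Ht_\mu$; expanding $\Omega[-uX/(1-t)]$ and using the alphabet-addition formulas \eqref{eq:alphaaddition} produces, after extracting the $u^{-m}$ (resp. $u^{-(m\pm\text{shift})}$) coefficient, expressions in $\Ht_\mu$ evaluated at shifted alphabets $\Ht_\mu[X + M/u - \cdots]$. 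Here \eqref{eq:GHTreciprocity} and the evaluations $\Ht_\mu[1+uD_\lambda]$ from Macdonald–Koornwinder reciprocity convert alphabet-shifts into honest $q,t$-rational multipliers. (3) The factors $h_d[B_\mu]$ and $h_{d-1}[B_\mu]$ (the $\Delta_{h_d}$ vs. $\Delta_{h_{d-1}}$ discrepancy between the first/third terms on the right and the left) should be reconciled using $B_\mu = B_\nu + (\text{one cell})$ as $\mu$ moves up/down by a box, i.e. via $h_d[B_\mu] = \sum h_j[B_\nu] h_{d-j}[\text{cell}]$, combined with \eqref{eq:cmunudmunurel} relating $c_{\mu\nu}$ and $d_{\mu\nu}$; the $T_\mu/T_\nu$ ratios that appear are exactly the $t^k$ prefactors and the powers in \eqref{eq:esDga}–\eqref{eq:hkDga}. (4) Finally, the $h_{m-1}^\ast$ vs. $h_m^\ast$ shift (first term on right) and the $e_{r-1}^\ast$ vs. $e_r^\ast$ shift with the $\chi(k=0)$ term (second line) come from the "remove a row" / "remove a column" splittings $h_m^\ast = h_1^\ast h_{m-1}^\ast + \cdots$ and $e_r^\ast = e_1^\ast e_{r-1}^\ast + \cdots$ handled through the perp operators $h_1^\perp$ and $e_1^\perp$ acting on $\Ht_\mu$, with the boundary term $\chi(k=0)$ tracking the "$-\chi(s=0)$" corrections in \eqref{eq:esDga} and \eqref{eq:hkDga}.

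I expect the main obstacle to be step (3)–(4): keeping the bookkeeping straight when simultaneously shifting the Macdonald index $\mu$ by a box (to account for $\Delta_{h_d}$ vs. $\Delta_{h_{d-1}}$) and shifting the degrees $m,r$ in $e_r^\ast h_m^\ast$, while the creation operators $\BC_{k+1}^\ast$ and $\BB_j^\ast$ are themselves degree-lowering operators of different "sizes" $k+1$ vs. $k$ vs. $k$. The cleanest route is probably to avoid a head-on coefficient comparison and instead verify the identity by pairing both sides against $e_n^\ast[XY]$ (with $n = r+m+d$, the common degree) and using $\langle e_n^\ast[XY], f[X]\rangle_\ast = f[Y]$: then each side becomes a concrete symmetric function in $Y$, every operator $\BC^\ast, \BB^\ast, \Delta, \nabla$ acts on the $\Ht_\mu[X]$ factor with a known eigenvalue or known Pieri action, and one can collapse the $X$-sum using \eqref{eq:enexpansion} and orthogonality. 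At that point the identity should reduce to \eqref{eq:genPierisums} applied with $g$ taken to be an appropriate product of $h$'s and $e$'s, together with the two hook-walk sums \eqref{eq:esDga} and \eqref{eq:hkDga}; the $t^k$ factors and the $\chi(k=0)$ term will fall out of the $T_\mu/T_\nu$ powers and the $\chi(s=0)$ corrections respectively. The delicate point will be checking that the ranges of the inner sums (over $\nu \subseteq \mu$ with a prescribed size window) match on both sides after the box-shifts, which is precisely where \eqref{eq:cmunudmunurel} and \eqref{eq:genPiericoefrel} do the work.
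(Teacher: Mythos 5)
You have correctly assembled the toolkit --- the plethystic formulas \eqref{eq:Bopsaction}--\eqref{eq:Copsaction}, reciprocity \eqref{eq:GHTreciprocity}, the Pieri sums \eqref{eq:esDga} and \eqref{eq:hkDga}, the Garsia--Haglund summation \eqref{eq:genPierisums}, and the relation \eqref{eq:cmunudmunurel} --- and you are right that the $\chi(k=0)$ term ultimately comes from the $\chi(s=0)$ correction in \eqref{eq:hkDga}, and that the $t^k$ prefactors are tied to the $T_\gamma/T_\tau$ powers. But your first route has a concrete flaw: you propose to compute $\BC_{k+1}^\ast\Ht_\mu$ and $\BB_j^\ast\Ht_\mu$ as ``Pieri-type expansions $\sum_{\nu\subseteq\mu}(\cdots)\Ht_\nu$.'' They are not. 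Applying \eqref{eq:Bopsaction}--\eqref{eq:Copsaction} to $\Ht_\mu[X]$ gives $\Ht_\mu$ evaluated at shifted alphabets $\Ht_\mu[X+M/u]$ and $\Ht_\mu[X-M/(qu)]$, and those have no tidy expansion in $\{\Ht_\nu[X]\}_\nu$. Matching coefficients of $\Ht_\nu$ head-on is simply not an available move, so the first half of your plan stalls at step (2).

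What actually makes the argument run is an intermediate rewrite you never state: a kernel expansion of $\Delta_{h_d}\nabla(e_r^\ast h_n^\ast)$ as a double sum over partitions $\gamma$ and integers $a$ of $(-1)^{r+a}e_{n+r}^\ast[XD_\gamma]\,h_{d-a}^\ast[1]\,h_{n+a-|\gamma|}^\ast[-1]\frac{T_\gamma}{w_\gamma}e_a[B_\gamma]$ (Proposition \ref{prop:kernel}). That is obtained by first writing $e_r^\ast h_n^\ast=\langle e_{n+r}^\ast[XW],h_r[W]e_n[W]\rangle$ over a dummy alphabet $W$, using $\langle\Ht_\mu[W],h_r[W]e_n[W]\rangle=e_n[B_\mu]$, and then pushing $h_d[B_\mu]e_n[B_\mu]$ through alphabet addition, reciprocity \eqref{eq:GHTreciprocity}, and the summation lemma $\sum_{\nu\subseteq\gamma}c_{\gamma\nu}^{e_a\perp}=e_a[B_\gamma]$ coming from \eqref{eq:genPierisums}. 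The whole point of the kernel form is that $e_c^\ast[XD_\gamma]$ is a product kernel, so the plethystic shifts in $\BC^\ast$ and $\BB^\ast$ expand it into a finite sum of $e_{c-b}^\ast[XD_\gamma]h_b[D_\gamma]$-type terms --- this is what your second route (pairing against $e_n^\ast[XY]$) is groping toward, except that the alphabet that must appear is $D_\gamma$, not a free $Y$. From there one applies \eqref{eq:hkDga} to $h_{b+1}[D_\gamma]$ (producing the $\chi(k=0)$ line), splits $B_\gamma=B_\tau+T_\gamma/T_\tau$ and $D_\gamma=D_\tau+MT_\gamma/T_\tau$ as in your step (3), and collapses the sum over $\gamma$ covering $\tau$ via \eqref{eq:esDga}; the two surviving pieces are then literally the kernel forms of $t^k\BB_k^\ast\Delta_{h_d}\nabla(e_r^\ast h_{n-1}^\ast)$ and $t^k\BB_{k+1}^\ast\Delta_{h_{d-1}}\nabla(e_r^\ast h_n^\ast)$. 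Without the explicit kernel formula, your plan is a direction, not a proof. One small slip besides: the natural degree for your $e_n^\ast[XY]$ pairing should be $n=r+m$, not $r+m+d$, since $\Delta_{h_d}\nabla$ preserves degree.
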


Notice that at $d=0$, one of the terms is equal to $0$.
This case of this identity is equivalent to the recurrence used
in \cite{GXZ10} to prove the Schr\"oder case of the compositional shuffle conjecture.

We have as a consequence the following expression of coefficients which have combinatorial meaning.

\begin{cor} \label{cor:sfreccurence}For non-negative integers $k$, $\ell$ and $a$ and for a composition $\beta$,
\begin{align}
&\left< \Delta_{h_{\ell}} \nabla \BC_{a+1}(C_\beta), s_{k+1,1^{|\beta|+a-k}} \right> \label{eq:coeffrec1}
= t^{a}q^{\ell(\beta)} \sum_{\ga \models a} \left< \Delta_{h_\ell} \nabla C_{\beta,\ga}, s_{k+1,1^{|\beta|+a-k-1}} \right>\\
&+ t^{a}q^{\ell(\beta)} \sum_{\ga \models a+1}\left< \Delta_{h_{\ell-1}} 
\nabla C_{\beta,\ga}, s_{k+1,1^{|\beta|+a-k}}\right>
+ \chi(a=0)\left< \Delta_{h_\ell} \nabla (C_\beta), s_{k,1^{|\beta|-k}}\right>~. \label{eq:coeffrec2}
\end{align}
\end{cor}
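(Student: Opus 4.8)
The plan is to deduce \eqref{eq:coeffrec1}--\eqref{eq:coeffrec2} from Theorem~\ref{thm:mainrec} by pairing its two displayed lines against $C_\beta$ in the $\ast$-scalar product, after converting hook Schur coefficients into coefficients of products $e_r^\ast h_m^\ast$. Abbreviate $n=|\beta|+a+1$. Two elementary reductions will be used in both directions. First, the Pieri rule gives $e_r h_m=s_{m+1,1^{r-1}}+s_{m,1^{r}}$ (with the convention $s_{0,1^{r}}=0$), so for $F$ homogeneous of degree $M$ the coefficients telescope:
\[
\langle F,\,s_{k+1,1^{M-k-1}}\rangle=\sum_{j=0}^{k}(-1)^{k-j}\,\langle F,\,e_{M-j}h_{j}\rangle .
\]
Second, from $\langle f,g\rangle=\langle f,\omega g^\ast\rangle_\ast$ together with $f[-\epsilon X]=(\omega f)[X]$ and $\omega e_k=h_k$ one finds $\omega\big((e_{a}h_{b})^\ast\big)=e_{b}^\ast h_{a}^\ast$, hence $\langle F,\,e_{a}h_{b}\rangle=\langle F,\,e_{b}^\ast h_{a}^\ast\rangle_\ast$ for all $F$.

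Applying these to $\langle \Delta_{h_\ell}\nabla\,\BC_{a+1}(C_\beta),\,s_{k+1,1^{n-k-1}}\rangle$ (note $n-k-1=|\beta|+a-k$ and $\BC_{a+1}(C_\beta)=C_{(a+1,\beta)}$) rewrites it as $\sum_{j=0}^{k}(-1)^{k-j}\langle \Delta_{h_\ell}\nabla\,\BC_{a+1}(C_\beta),\,e_{j}^\ast h_{n-j}^\ast\rangle_\ast$. Since $\nabla$ and $\Delta_{h_\ell}$ are self-adjoint for the $\ast$-scalar product (being diagonalized by the $\ast$-orthogonal basis $\{\Ht_\mu\}$) and $\BC_{a+1}^\ast$ is by definition the $\ast$-adjoint of $\BC_{a+1}$, each summand equals $\langle C_\beta,\,\BC_{a+1}^\ast\Delta_{h_\ell}\nabla(e_{j}^\ast h_{n-j}^\ast)\rangle_\ast$, which is precisely the left-hand side of \eqref{eq:mainrec} after the relabeling $(k,d,r,m)\rightarrow(a,\ell,j,n-j)$. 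Substituting the right-hand side \eqref{eq:mainrec}--\eqref{eq:mainrec2}, moving $\BB_a^\ast,\BB_{a+1}^\ast$ (again $\ast$-adjoints) back to $\BB_a,\BB_{a+1}$ acting on $C_\beta$ and $\nabla,\Delta_h$ back across the pairing, and translating the resulting $\ast$-pairings to ordinary ones by the second reduction, produces three alternating sums in $j$ of the form $\sum_j(\pm)\langle\,\cdot\,,e_{\bullet-j}h_{j}\rangle$. These re-telescope by the first reduction: the sum attached to $\Delta_{h_\ell}\nabla\BB_a(C_\beta)$ (degree $n-1$) becomes $\langle \Delta_{h_\ell}\nabla\BB_a(C_\beta),\,s_{k+1,1^{|\beta|+a-k-1}}\rangle$, the one attached to $\Delta_{h_{\ell-1}}\nabla\BB_{a+1}(C_\beta)$ (degree $n$) becomes $\langle \Delta_{h_{\ell-1}}\nabla\BB_{a+1}(C_\beta),\,s_{k+1,1^{|\beta|+a-k}}\rangle$, and the $\chi(a=0)$ term carries an $h_{j-1}$, which shifts the hook to $s_{k,1^{|\beta|-k}}$, giving $\chi(a=0)\langle\Delta_{h_\ell}\nabla C_\beta,\,s_{k,1^{|\beta|-k}}\rangle$. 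Finally, \eqref{eq:BCrelation} in the forms $\BB_a(C_\beta)=q^{\ell(\beta)}\sum_{\gamma\models a}C_{\beta,\gamma}$ and $\BB_{a+1}(C_\beta)=q^{\ell(\beta)}\sum_{\gamma\models a+1}C_{\beta,\gamma}$ turns the first two terms into the composition sums of \eqref{eq:coeffrec1}--\eqref{eq:coeffrec2} and supplies the factors $q^{\ell(\beta)}$; together with the factors $t^a$ already present in \eqref{eq:mainrec} this is exactly the asserted identity.

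For the corollary itself, granting Theorem~\ref{thm:mainrec}, the only real work is bookkeeping: tracking homogeneous degrees so that each alternating sum re-telescopes to the correct hook, and keeping the sign patterns of the forward and backward telescopings consistent (in particular handling the index shift $k\mapsto k-1$ forced by the $h_{j-1}$ in the $\chi(a=0)$ term). No symmetric-function input is needed beyond \eqref{eq:BCrelation}, the classical Pieri rule, and the standard relation $\langle f,g\rangle=\langle f,\omega g^\ast\rangle_\ast$ between the two scalar products; all of the substantive difficulty lies in establishing \eqref{eq:mainrec}--\eqref{eq:mainrec2} themselves.
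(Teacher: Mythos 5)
Your proof is correct and is essentially the same argument as the paper's: take the $\ast$-scalar product of Theorem~\ref{thm:mainrec} against $C_\beta$, move operators across using self-adjointness of $\nabla$ and $\Delta_{h_\ell}$ and the $\ast$-adjointness of $\BC_{a+1}$ and $\BB_m$, convert between the two scalar products via $\langle f,g\rangle=\langle f,\omega g^\ast\rangle_\ast$, invoke \eqref{eq:BCrelation}, and expand hook Schur functions as alternating sums of $e\cdot h$ products. The only cosmetic difference is the direction of the hook expansion: you use $\sum_{j=0}^{k}(-1)^{k-j}e_{M-j}h_j$, whereas the paper uses $s_{k+1,1^{b}}=\sum_{r\geq 0}(-1)^{r}h_{k+1+r}e_{b-r}$; both are valid telescopings and the bookkeeping works out the same either way.
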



\begin{proof}
This identity is derived by taking the $\ast$-scalar product with $C_\beta$ on both sides of
the equation \eqref{eq:mainrec}--\eqref{eq:mainrec2}.  We begin by taking the $\ast$-scalar product
of $C_\beta$ with the left hand side of \eqref{eq:mainrec}.  We note that since $\{\Ht_\mu\}_\mu$ is an
orthogonal basis with respect to the $\ast$-scalar product and are eigenvectors of the operators $\nabla$ and $\Delta_{h_\ell}$,
these operators are self dual with respect to the $\ast$-scalar product and commute with each other.
\begin{equation}
\left< C_\beta, \BC_{a+1}^\ast \Delta_{h_{\ell}}\! \nabla( e_r^\ast h_{n}^\ast ) \right>_\ast
= \left< \Delta_{h_{\ell}}\! \nabla \BC_{a+1}(C_\beta),  e_r^\ast h_{n}^\ast \right>_\ast =
\left< \Delta_{h_{\ell}}\! \nabla \BC_{a+1}(C_\beta),  h_r e_{n} \right>.
\end{equation}

Similarly, the scalar product with the other three terms
of the right hand side of \eqref{eq:mainrec}--\eqref{eq:mainrec2} have expressions involving only $C_\alpha$
once equation \eqref{eq:BCrelation} is applied to the expressions of the form $\BB_m(C_\beta)$.  We conclude that
\begin{align}
&\left< \Delta_{h_{\ell}} \nabla \BC_{a+1}(C_\beta), h_{k+1} e_{|\beta|+a-k} \right> 
= t^{a}q^{\ell(\beta)} \sum_{\ga \models a} \left< \Delta_{h_\ell} \nabla C_{\beta,\ga}, h_{k+1} e_{|\beta|+a-k-1} \right>\\
&+ t^{a}q^{\ell(\beta)} \sum_{\ga \models a+1}\left< \Delta_{h_{\ell-1}} 
\nabla C_{\beta,\ga}, h_{k+1} e_{|\beta|+a-k} \right>
+ \chi(a=0)\left< \Delta_{h_\ell} \nabla (C_\beta), h_k e_{|\beta|-k} \right>~.
\end{align}

Equations \eqref{eq:coeffrec1}--\eqref{eq:coeffrec2} then follow because 
$s_{k+1,1^{|\beta|+a-k}}=\sum_{r=0}^{|\beta|+a-k} (-1)^r h_{k+1+r} e_{|\beta|+a-k-r}$.
\end{proof}

Note that in the case that $k=0$ and $r=1$ that $s_{r-1,1^{n+1-r}}$ is $0$ if $n>0$ and $s_{r-1,1^{n+1-r}} = 1$
if $n=0$.

\begin{theorem} \label{thm:mainresult} For non-negative integers $k$ and $\ell$ and
a composition $\alpha$,
\begin{equation}\label{eq:induction}
\Catp_{\alpha,k,\ell}(q,t) = \left< \Delta_{h_\ell} \nabla( C_\alpha ), s_{k+1,1^{|\alpha|-k-1}} \right>~.
\end{equation}
\end{theorem}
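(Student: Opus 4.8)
The plan is to prove the identity \eqref{eq:induction} by induction on $|\alpha|$, matching the combinatorial recurrences of Propositions \ref{prop:Catrec1} and \ref{prop:Catrec2} with the symmetric function recurrences packaged in Corollary \ref{cor:sfreccurence}. First I would set up the base case: when $|\alpha|$ is small (say $|\alpha| \le 1$, so $\alpha$ is empty or a single $1$), both sides can be computed directly — the empty composition gives $C_\emptyset = 1$, and $\Delta_{h_\ell}\nabla(1)$ together with the relevant Schur coefficient is easily seen to agree with the (essentially trivial) count of $\circ$-decorated Dyck paths of the corresponding small size. One should also be careful with the degenerate cases flagged just before the theorem, e.g. $s_{r-1,1^{n+1-r}}$ vanishing or equalling $1$, which is exactly the $\chi(a=0)$ boundary term in the recurrence.

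Next I would observe that the right-hand side of \eqref{eq:induction}, call it $F_{\alpha,k,\ell}$, satisfies \emph{exactly} the two recurrences in Corollary \ref{cor:sfreccurence}: splitting $\alpha = (a+1,\beta)$ with $a \ge 1$ gives \eqref{eq:coeffrec1}--\eqref{eq:coeffrec2} with the $\chi(a=0)$ term absent, which is term-for-term the combinatorial recurrence \eqref{eq:firststepbig} of Proposition \ref{prop:Catrec1}; and splitting $\alpha = (1,\beta)$ (the $a=0$ case) gives the three-term recurrence with the $\chi(a=0)$ term present, where one uses that $C_{(1,\beta)} = \BC_1(C_\beta)$ so the left-hand side of \eqref{eq:coeffrec1} with $a=0$ is precisely $\langle \Delta_{h_\ell}\nabla C_{(1,\beta)}, s_{k+1,1^{|\beta|-k}}\rangle$, and that the last term $\langle \Delta_{h_\ell}\nabla C_\beta, s_{k,1^{|\beta|-k}}\rangle = F_{\beta,k-1,\ell}$ — this matches \eqref{eq:firststepsmall} of Proposition \ref{prop:Catrec2}. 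Since in both recurrences every composition appearing on the right-hand side is of strictly smaller size than $\alpha$ (when $a \ge 1$, each $(\beta,\gamma)$ with $\gamma \models a$ or $a+1$ has size $|\beta|+a$ or $|\beta|+a+1$, both less than $|\beta|+a+1 = |\alpha|$; when $a=0$, the compositions $\beta$ and $(\beta,1)$ have sizes $|\beta|$ and $|\beta|+1 = |\alpha|$ — here the $(\beta,1)$ term has the \emph{same} size, so one must additionally induct on $\ell$, noting that this term carries $\ell-1$), the induction closes.

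The one subtlety requiring care is this secondary induction: the Case 3 term in Proposition \ref{prop:Catrec2} (equivalently the $\gamma\models a+1$ sum with $a=0$ in Corollary \ref{cor:sfreccurence}) does not decrease $|\alpha|$. The clean fix is to induct on the pair $(|\alpha| + \ell, \ell)$ lexicographically, or simply on $|\alpha|+\ell$: when $a=0$, the terms $C_\beta$ (size $|\beta|$, decoration parameter $\ell$ or $\ell-1$) have $|\beta|+\ell < |\alpha|+\ell$, and the term $C_{(\beta,1)}$ has size $|\alpha|$ but decoration parameter $\ell-1$, so $|\alpha|+(\ell-1) < |\alpha|+\ell$; when $a\ge 1$ every right-hand term has strictly smaller composition-size, hence strictly smaller $|\alpha|+\ell$ as well (the $\ell-1$ terms only help). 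Thus all right-hand terms of both recurrences have strictly smaller value of the induction parameter $|\alpha|+\ell$.

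The main obstacle I anticipate is not the induction bookkeeping but the verification that the two families of recurrences genuinely line up \emph{coefficient by coefficient} — in particular confirming that the powers of $t^a$ and $q^{\ell(\beta)}$ produced on the symmetric function side (which emerge from $\BC_{a+1}^\ast$, $\BB_k^\ast$, the $\chi$-terms, and the application of \eqref{eq:BCrelation} turning $\BB_m(C_\beta)$ into $q^{\ell(\beta)}\sum_{\gamma\models m} C_{\beta,\gamma}$ in the proof of Corollary \ref{cor:sfreccurence}) match exactly the weights $t^a q^{\ell(\beta)}$ appearing in Propositions \ref{prop:Catrec1} and \ref{prop:Catrec2}, and that the index shifts on $k$ (peaks) and $\ell$ (decorated double rises) agree. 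This is essentially a matter of carefully comparing \eqref{eq:firststepbig}--\eqref{eq:firststepsmall} against \eqref{eq:coeffrec1}--\eqref{eq:coeffrec2} and checking that $F_{(a+1,\beta),k,\ell}$ appears on the left of \eqref{eq:coeffrec1} via $C_{(a+1,\beta)} = \BC_{a+1}(C_\beta)$ — a direct, if slightly tedious, bookkeeping check. Given that alignment, the theorem follows, and Theorem \ref{thm:mainrec} (the underlying symmetric function identity, whose proof is deferred to the remainder of Section \ref{sec:sfrec}) supplies the only nontrivial analytic input.
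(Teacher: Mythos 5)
Your proposal is correct and follows essentially the same route as the paper: combine Propositions \ref{prop:Catrec1} and \ref{prop:Catrec2} into the single recurrence, observe via $C_{(a+1,\beta)} = \BC_{a+1}(C_\beta)$ and \eqref{eq:BCrelation} that Corollary \ref{cor:sfreccurence} gives the identical recurrence for $F_{\alpha,k,\ell}$, induct (the paper phrases the order as ``smaller $|\alpha|$ or smaller $\ell$,'' which your $|\alpha|+\ell$ parameter implements cleanly), and check the base case $\alpha=(1)$. One small slip worth flagging: you assert that when $a\ge 1$ the $\gamma\models a+1$ terms have ``strictly smaller composition-size, hence strictly smaller $|\alpha|+\ell$ as well'' -- in fact $|(\beta,\gamma)| = |\beta|+a+1 = |\alpha|$, the same size, so those terms decrease $|\alpha|+\ell$ only because they carry $\ell-1$; your chosen induction parameter handles this correctly, but the stated reason is wrong for those terms.
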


\begin{proof}
We have just established in the previous section (combining Propositions \ref{prop:Catrec1} and \ref{prop:Catrec2})
that
\begin{align}\label{eq:firststepbig}
\Catp_{(a+1,\beta),k,\ell}(q,t)
= &t^{a} q^{\ell(\beta)} \sum_{\ga \models a} \Catp_{(\beta,\ga),k,\ell}(q,t) 
+ t^{a} q^{\ell(\beta)} \sum_{\ga \models a+1} \Catp_{(\beta,\ga),k,\ell-1}(q,t)\\
&+ \chi(a=0) \Catp_{\beta,k-1,\ell}(q,t)~. 
\end{align}
This combinatorial recurrence agrees with Corollary \ref{cor:sfreccurence} in the sense that if
$$\Catp_{(\beta,\ga),k,\ell}(q,t) = \left< \Delta_{h_\ell} \nabla C_{\beta,\ga}, s_{k+1,1^{|\beta|+a-k-1}} \right>$$
and 
$$\Catp_{(\beta,\ga),k,\ell-1}(q,t) = \left< \Delta_{h_{\ell-1}} 
\nabla C_{\beta,\ga}, s_{k+1,1^{|\beta|+a-k}}\right>$$ and
$$\Catp_{\beta,k-1,\ell}(q,t) = \left< \Delta_{h_\ell} \nabla (C_\beta), s_{k,1^{|\beta|-k}}\right>~,$$
then 
$$\Catp_{(a+1,\beta),k,\ell}(q,t) = \left< \Delta_{h_{\ell}} \nabla \BC_{a+1}(C_\beta), s_{k+1,1^{|\beta|+a-k}} \right>~.$$
The indices of the right hand side of this recurrence have the property that either the value of
$\ell$ is lower or the size of the composition $\alpha$ is smaller.
Therefore we proceed by induction by assuming that equation \eqref{eq:induction} holds true for
compositions of smaller size and smaller values of $\ell$.  Then it remains to show that
it is true for a base case.
 
We note that if $\alpha = (1)$, then
$$\left< \Delta_{h_\ell} \nabla( C_1 ), s_{k+1,1^{-k}} \right> = 1$$
if and only if $k=0$ and it is equal to $0$ otherwise.
Similarly, $\Catp_{(1),k,\ell}(q,t)=1$ if and only if $k=0$ (and $0$ otherwise)
because the generating function
for $\circ$-decorated Schr\"oder paths has one term for
the Schr\"oder path consisting of $\ell$ $\circ$-decorated rises,
a peak, followed by horizontal steps back to the diagonal.
\end{proof}

In order to prove our symmetric function identity from Theorem \ref{thm:mainrec} 
we break the calculation into lemmas that will hopefully make
a long calculation a little easier to follow.

\begin{lemma}\label{lem:multhbBmu} For integers $d$ and partitions $\mu$ and $\nu$,
\begin{align}\label{eq:lemmamulthbBmu}
\frac{T_\nu}{w_\nu} h_d[B_\mu] \Ht_\mu[D_\nu] =
\sum_{a \geq 0} (-1)^a  h_{d-a}\left[ \frac{1}{M}\right]
\sum_{\substack{\ga \supseteq \nu\\|\ga|=|\nu|+a}}
\Ht_\mu[D_\ga] \frac{T_\ga}{w_\ga} c_{\ga\nu}^{e_a^\perp}~.
\end{align}
\end{lemma}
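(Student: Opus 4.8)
The plan is to expand $h_d[B_\mu]$ in terms of the generalized Pieri coefficients $c^{e_a^\perp}_{\gamma\nu}$ using Macdonald-Koornwinder reciprocity, and then to identify the coefficients that emerge. First I would write $h_d[B_\mu] = h_d\!\left[\frac{D_\mu + 1}{M}\right]$ (since $D_\mu = MB_\mu - 1$, so $B_\mu = (D_\mu+1)/M$), and apply the alphabet-addition formula \eqref{eq:alphaaddition} to split off the constant piece:
\[
h_d[B_\mu] = \sum_{a \geq 0} h_{d-a}\!\left[\frac{1}{M}\right] h_a\!\left[\frac{D_\mu}{M}\right] = \sum_{a\geq 0} h_{d-a}\!\left[\frac 1M\right] (h_a)^\ast[D_\mu].
\]
Since $h_a^\ast = \omega(e_a)^\ast$ up to the $\epsilon$-twist — more precisely I would use the relation $f[-\epsilon X] = (\omega f)[X]$ to rewrite $(h_a)^\ast[D_\mu]$ as $(\pm 1)(\omega h_a)^\ast[-\epsilon D_\mu] = (-1)^a (e_a)^\ast[-\epsilon D_\mu]$, and then track the sign carefully — the factor $(-1)^a$ in the statement should appear. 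This is the bookkeeping step where I expect most of the routine effort to go.

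Next I would bring in the generalized Pieri setup from \eqref{eq:genpiericoeffs}--\eqref{eq:genPiericoefrel}. The key is the expansion $(e_a)^\ast[D_\mu]$ (or its $\epsilon$-twisted form) as a sum over $\gamma \supseteq \nu$ — but wait, the left-hand side has $\Ht_\mu[D_\nu]$ attached, not $D_\mu$ bare. So the real move is to use reciprocity \eqref{eq:GHTreciprocity}, $\Ht_\mu[D_\nu] = (-1)^{|\mu|+|\nu|}\Ht_\nu[D_\mu]\frac{T_\mu}{T_\nu}$, to swap the roles of $\mu$ and $\nu$. After that swap, $h_d[B_\mu]$ stays as it is (it does not involve $\nu$), and the object carrying the $D_\mu$ dependence is $\Ht_\nu[D_\mu]$ times $(h_a)^\ast[D_\mu]$, i.e. a product $\big((h_{d-a})[\tfrac1M]\big)(h_a)^\ast[D_\mu]\,\Ht_\nu[D_\mu]$. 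Now I would use that multiplication by $(e_a)^\ast$ (the $\omega$-dual of $h_a^\ast$) on $\Ht_\nu$ is governed by $d^{\,e_a^\ast}_{\gamma\nu}$ via \eqref{eq:genpiericoeffs}, and then \eqref{eq:genPiericoefrel} converts $d^{\omega f^\ast}_{\gamma\nu} w_\gamma = c^{f^\perp}_{\gamma\nu} w_\nu$ to rewrite everything in terms of $c^{e_a^\perp}_{\gamma\nu}$. So the product becomes $\sum_{\gamma \supseteq \nu,\,|\gamma|=|\nu|+a} c^{e_a^\perp}_{\gamma\nu} \frac{w_\nu}{w_\gamma}\Ht_\gamma[D_\mu]$ — or rather the appropriate $\epsilon$-twisted version of this.

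The final step is to reciprocate back: apply \eqref{eq:GHTreciprocity} a second time to each $\Ht_\gamma[D_\mu] = (-1)^{|\gamma|+|\mu|}\Ht_\mu[D_\gamma]\frac{T_\gamma}{T_\mu}$, and then collect all the $T$'s, $w$'s and signs. The left side's prefactor $\frac{T_\nu}{w_\nu}$ should combine with the $\frac{T_\mu}{T_\nu}$ from the first reciprocity and the $\frac{T_\gamma}{T_\mu}$ from the second to leave exactly $\frac{T_\gamma}{w_\gamma}$ on each term (using $|\gamma| = |\nu| + a$ to absorb stray signs, and $w_\gamma$ coming from the conversion \eqref{eq:genPiericoefrel}). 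I expect the main obstacle to be precisely this sign and normalization reconciliation: keeping straight the three sources of $\pm$ (the $\epsilon$ appearing in $f[-\epsilon X]=(\omega f)[X]$, the two applications of reciprocity, and the parity $(-1)^a$ hidden in passing between $h_a^\ast$ and $e_a^\ast$), and confirming they collapse to the single clean $(-1)^a$ in \eqref{eq:lemmamulthbBmu}. Everything else — the alphabet-addition split and the invocation of the Pieri relations — is mechanical once the reciprocity/duality dictionary is set up.
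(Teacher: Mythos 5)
Your proposal follows the paper's proof step for step: alphabet addition to split $h_d[B_\mu]$, reciprocity \eqref{eq:GHTreciprocity} to trade $\Ht_\mu[D_\nu]$ for $\Ht_\nu[D_\mu]$, expansion of $h_a^\ast \Ht_\nu$ via the generalized Pieri coefficients $d^{\omega e_a^\ast}_{\gamma\nu}$, a second reciprocity, and the conversion \eqref{eq:genPiericoefrel} to land on $c^{e_a\perp}_{\gamma\nu}$. The only wobble is the $\epsilon$-twist digression early on, which is unnecessary (and the claimed identity $h_a^\ast[D_\mu]=(-1)^a e_a^\ast[-\epsilon D_\mu]$ is off by the sign you are trying to extract): the factor $(-1)^a$ does not come from any $\omega$-twist but, as you correctly note at the end, from composing the two reciprocity signs, $(-1)^{|\mu|+|\nu|}\cdot(-1)^{|\gamma|+|\mu|}=(-1)^{|\gamma|-|\nu|}=(-1)^a$.
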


\begin{proof}
First we apply equation \eqref{eq:alphaaddition} to $h_d[B_\mu]$ (the alphabet addition formula)
and show that
\begin{align}
h_d[B_\mu] = h_d\!\!\left[\frac{MB_\mu-1}{M} + \frac{1}{M} \right] = \sum_{a \geq 0} h_{a}\!\!\left[\frac{D_\mu}{M} \right]
h_{d-a}\!\!\left[\frac{1}{M} \right]~.
\end{align}
To the left hand side of equation \eqref{eq:lemmamulthbBmu} we apply the reciprocity formula \eqref{eq:GHTreciprocity}, 
and then use the generalized Pieri coefficients
that were introduced in \cite{GarHag02} from equation \eqref{eq:genpiericoeffs},
and then reapply the reciprocity formula to derive
\begin{align}
\frac{T_\nu}{w_\nu} h_d[B_\mu] \Ht_\mu[D_\nu] 
&= (-1)^{|\mu|+|\nu|} \frac{T_\mu}{w_\nu} \sum_{a \geq 0} h_{a}\!\!\left[\frac{D_\mu}{M} \right]
\Ht_\nu[D_\mu] h_{d-a}\!\!\left[\frac{1}{M} \right]\\
&=(-1)^{|\mu|+|\nu|} \frac{T_\mu}{w_\nu} \sum_{a \geq 0} \sum_{\substack{\ga \supseteq \nu\\|\ga|=|\nu|+a}}
d_{\ga\nu}^{\omega e_a^\ast} \Ht_\ga[D_\mu]
h_{d-a}\!\!\left[\frac{1}{M} \right]\\
&=\frac{T_\mu}{w_\nu} \sum_{a \geq 0} \sum_{\substack{\ga \supseteq \nu\\|\ga|=|\nu|+a}}
(-1)^a \frac{T_\ga}{T_\mu} \Ht_\mu[D_\ga]
h_{d-a}\!\!\left[\frac{1}{M} \right] d_{\ga\nu}^{\omega e_a^\ast}~.
\end{align}
Now recall that we can convert the $d_{\ga\nu}^{f}$ coefficients to $c_{\ga\nu}^{g\perp}$ coefficients using
equation \eqref{eq:genPiericoefrel}, the resulting equation is equal to the right hand side of the
equation stated in \eqref{eq:lemmamulthbBmu}.
\end{proof}

Now the coefficients $c_{\ga\nu}^{e_a\perp}$ have the sum over $\nu$ that was also calculated
by Garsia and Haglund \cite{GarHag02} and we need this expression that we state in the following lemma.

\begin{lemma}
For $a$ a non-negative integer and for a fixed partition $\gamma$,
\begin{align}
\sum_{\substack{\nu \subseteq \ga\\|\nu| = |\ga|- a}} c_{\ga\nu}^{e_a\perp} = e_a[B_\ga]
\end{align}
\end{lemma}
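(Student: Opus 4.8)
The plan is to read the identity off from the Macdonald--Koornwinder reciprocity already recalled, by a coproduct (skewing) expansion of $\Ht_\ga$. First I would specialize the reciprocity
$$\frac{\Ht_\mu[1+uD_\la]}{\prod_{c\in\mu}(1-ut^{\ell'(c)}q^{a'(c)})}=\frac{\Ht_\la[1+uD_\mu]}{\prod_{c\in\la}(1-ut^{\ell'(c)}q^{a'(c)})}$$
to $\la=\emptyset$. Since $\Ht_\emptyset=1$ the right side is $1$, and since $D_\emptyset=MB_\emptyset-1=-1$ the left side becomes $\Ht_\ga[1-u]\big/\prod_{c\in\ga}(1-ut^{\ell'_\ga(c)}q^{a'_\ga(c)})$, so
$$\Ht_\ga[1-u]=\prod_{c\in\ga}\bigl(1-ut^{\ell'_\ga(c)}q^{a'_\ga(c)}\bigr).$$
Taking $u=0$ also records the normalization $\Ht_\nu[1]=1$ for every partition $\nu$, which I use below.

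Next I would expand both sides in powers of $u$. Reading the multiset $\{q^{a'_\ga(c)}t^{\ell'_\ga(c)}:c\in\ga\}$ as the alphabet $B_\ga$, the product on the right equals $\sum_{a\ge0}(-u)^a e_a[B_\ga]$. For the left side, the standard coproduct expansion of a symmetric function along one subtracted letter, $f[X-u]=\sum_{a\ge0}(-u)^a(e_a^\perp f)[X]$, applied with $X=1$ and $f=\Ht_\ga$, together with $\Ht_\nu[1]=1$ and the definition \eqref{eq:genpiericoeffs} of the generalized Pieri coefficients, gives
$$\Ht_\ga[1-u]=\sum_{a\ge0}(-u)^a(e_a^\perp\Ht_\ga)[1]=\sum_{a\ge0}(-u)^a\sum_{\substack{\nu\subseteq\ga\\|\nu|=|\ga|-a}}c_{\ga\nu}^{e_a\perp},$$
where only $|\nu|=|\ga|-a$ survives because $e_a^\perp$ lowers degree by $a$. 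Matching the coefficient of $(-u)^a$ in the two displays is exactly the asserted identity.

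The argument is short once the set-up is fixed, so the main thing to get right is the plethystic sign bookkeeping: the expansion is along $e_a^\perp$ (not $h_a^\perp$) precisely because the letter is subtracted, and ``$1-u$'' must be read as the honest difference of alphabets ($p_k[1-u]=1-u^k$), which is exactly $1+uD_\emptyset$. If one prefers to stay with the machinery used for the previous lemma, one can instead invoke the summation formula \eqref{eq:genPierisums} with $\omega g=e_a$ (so $g=h_a$): its left side collapses to $\sum_{\nu\subseteq\ga}c_{\ga\nu}^{e_a\perp}$ since all other degrees contribute $0$, and it then remains to check that $\nabla^{-1}\bigl(e_a[\tfrac{X-\epsilon}{M}]\bigr)\big|_{X\to D_\ga}=e_a[B_\ga]$, which follows by expanding via the alphabet addition formula \eqref{eq:alphaaddition}, applying $\nabla^{-1}$ termwise through \eqref{eq:enexpansion}, and simplifying with reciprocity \eqref{eq:GHTreciprocity}; this recovers the same generating-function identity (and is in essence the computation of \cite{GarHag02}). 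I expect the first route to be the cleaner one.
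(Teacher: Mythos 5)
Your proof is correct, and it takes a genuinely different route from the one in the paper. The paper deduces the lemma from the Garsia--Haglund summation formula \eqref{eq:genPierisums}, which reduces the sum of generalized Pieri coefficients to $\nabla^{-1}h_a\!\left[\frac{X-\epsilon}{M}\right]\big|_{X\to D_\ga}$, and then evaluates that expression by alphabet addition together with the fact that $\nabla^{-1}$ carries $h_a^\ast$ to $e_a^\ast$. You instead specialize Macdonald--Koornwinder reciprocity at $\lambda=\emptyset$, which produces the closed form $\Ht_\ga[1-u]=\prod_{c\in\ga}\bigl(1-uq^{a'_\ga(c)}t^{\ell'_\ga(c)}\bigr)$, and you read off the identity by matching coefficients of $(-u)^a$: the right side is the generating function $\sum_a(-u)^a e_a[B_\ga]$, and the left side expands by the single-letter skew formula $f[X-u]=\sum_n(-u)^n(e_n^\perp f)[X]$ evaluated at $X=1$, using $\Ht_\nu[1]=1$ and the definition \eqref{eq:genpiericoeffs}. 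This is more self-contained: it avoids \eqref{eq:genPierisums} and any $\nabla^{-1}$ computation, and it makes transparent why $e_a$ (rather than $h_a$) appears in the perp, namely because the subtracted single letter selects column shapes in the coproduct. You do implicitly use that $c_{\ga\nu}^{e_a\perp}=0$ unless $\nu\subseteq\ga$ to match the lemma's indexing, but the paper builds that constraint into the definition, so there is no gap. The paper's route has the modest advantage of keeping the lemma within the same Garsia--Haglund toolkit used for the preceding lemma; yours is shorter and cleaner. Your closing remark about the alternate route (taking $\omega g=e_a$ in \eqref{eq:genPierisums}) is reasonable, though note the paper's displayed computation there proceeds with $h_a$ on the right-hand side, so if you pursue that route you should reconcile the bookkeeping carefully rather than follow the paper's text verbatim; your primary argument sidesteps the issue entirely.
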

\begin{proof}
Of course if $|\ga| < a$ then both the left and right hand sides of this expression are equal to $0$.
From the identity in equation \eqref{eq:genPierisums} in the special case of $g = e_a$ we have
\begin{align}\label{eq:here1}
\sum_{\substack{\nu \subseteq \ga\\|\nu| = |\ga|- a}} c_{\ga\nu}^{e_a\perp} = 
\nabla^{-1} h_a\!\!\left[ \frac{X-\epsilon}{M} \right] \Big|_{X \rightarrow D_\ga}~.
\end{align}
We know that $\nabla^{-1} h_a\!\!\left[\frac{X}{M} \right] = e_a\!\!\left[\frac{X}{M} \right]$ and hence we apply the
alphabet addition formulas to simplify the right hand side of the equation \eqref{eq:here1} as
\begin{align}
\nabla^{-1} h_a\!\!\left[ \frac{X-\epsilon}{M} \right] \Big|_{X \rightarrow D_\ga}
&= \sum_{b \geq 0} \nabla^{-1} h_{a-b}\!\!\left[ \frac{X}{M} \right]
h_b\!\!\left[ \frac{-\epsilon}{M} \right]\Big|_{X \rightarrow D_\ga}\\
&= \sum_{b \geq 0}  e_{a-b}\!\!\left[ \frac{D_\ga}{M} \right]
e_b\!\!\left[ \frac{1}{M} \right]
= e_{a}\!\!\left[ \frac{M B_\ga -1}{M} + \frac{1}{M}\right] = e_a[B_\ga]~.\qedhere
\end{align}
\end{proof}

The following result gives us an expression for a kernel which we can use
to apply the $\BB$ and $\BC$ operators.
\begin{prop} \label{prop:kernel} For non-negative integers $d,r$ and $n$
\begin{align}
\Delta_{h_d} \nabla( e_r^\ast h_n^\ast ) = \sum_{a \geq 0} \sum_{\gamma}
(-1)^{r+a} e_{n+r}\!\!\left[\frac{X D_\ga}{M}\right] h_{d-a}\!\!\left[\frac{1}{M}\right]
h_{n+a-|\ga|}\!\!\left[\frac{-1}{M}\right]  \frac{T_\ga}{w_\ga} e_a[B_\ga]
\end{align}
where the sum over $\ga$ is over all partitions of size smaller than or equal to $n+a$.
\end{prop}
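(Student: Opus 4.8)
The plan is to expand the left‑hand side in the $\tilde H$‑basis, evaluate the resulting $\ast$‑pairing with the two lemmas just proved, reduce everything to one ``telescoping'' identity, and then run Lemma~\ref{lem:multhbBmu} in reverse.

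First I would use that $\{\Ht_\mu\}$ is $\ast$‑orthogonal and an eigenbasis for $\nabla$ and $\Delta_{h_d}$ to write
\[
\Delta_{h_d}\nabla(e_r^\ast h_n^\ast)=\sum_{\mu}\frac{\Ht_\mu[X]}{w_\mu}\,T_\mu\,h_d[B_\mu]\,\langle\Ht_\mu,\,e_r^\ast h_n^\ast\rangle_\ast .
\]
Since $e_r^\ast h_n^\ast=(e_rh_n)^\ast$ and $\langle f,g^\ast\rangle_\ast=\langle f,\omega g\rangle$ (because $\omega$ and $\ast$ both act diagonally on power sums), the pairing equals $\langle\Ht_\mu,h_re_n\rangle=\langle e_n^\perp\Ht_\mu,h_r\rangle=\sum_\nu c^{e_n^\perp}_{\mu\nu}\langle\Ht_\nu,h_r\rangle$. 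Only $\nu$ with $|\nu|=|\mu|-n$ contribute, and for those $\langle\Ht_\nu,h_r\rangle$ is the coefficient of $s_{(r)}$ in $\Ht_\nu$, which is $1$; so the identity $\sum_{\nu\subseteq\gamma,\,|\nu|=|\gamma|-a}c^{e_a^\perp}_{\gamma\nu}=e_a[B_\gamma]$ (with $\gamma=\mu$, $a=n$) gives $\langle\Ht_\mu,e_r^\ast h_n^\ast\rangle_\ast=e_n[B_\mu]$, which also forces $|\mu|=n+r$. Hence the left side equals $\sum_{\mu\vdash n+r}\frac{\Ht_\mu[X]}{w_\mu}\,h_d[B_\mu]\,\bigl(T_\mu e_n[B_\mu]\bigr)$.

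The crux is the auxiliary identity, valid for $|\mu|=n+r$,
\[
T_\mu\, e_n[B_\mu]=(-1)^r\sum_\nu h_{n-|\nu|}\!\!\left[\tfrac{-1}{M}\right]\frac{T_\nu}{w_\nu}\,\Ht_\mu[D_\nu].
\]
I would prove it by using reciprocity \eqref{eq:GHTreciprocity} to turn $\tfrac{T_\nu}{w_\nu}\Ht_\mu[D_\nu]$ into $(-1)^{|\mu|+|\nu|}\tfrac{T_\mu}{w_\nu}\Ht_\nu[D_\mu]$, then writing $h_{n-|\nu|}[-1/M]=[z^n]\,z^{|\nu|}\Omega[-z/M]$ and summing over $\nu$ by \eqref{eq:enexpansion} specialized at a single variable (where $\Ht_\nu$ becomes the $|\nu|$‑th power), which yields $\sum_\nu\frac{(-z)^{|\nu|}\Ht_\nu[D_\mu]}{w_\nu}=\Omega[-zD_\mu/M]$. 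Because $1+D_\mu=MB_\mu$, the two infinite products telescope: $\Omega[-z/M]\,\Omega[-zD_\mu/M]=\Omega[-zB_\mu]$, whose $z^n$‑coefficient is $(-1)^n e_n[B_\mu]$; collecting $(-1)^{|\mu|+n}=(-1)^r$ finishes the identity. I expect this telescoping (the cancellation of the two $\Omega$'s via $1+D_\mu=MB_\mu$) to be the one genuinely slick step; the rest is careful but routine plethystic bookkeeping, with the usual risk of sign and $\epsilon$ slips.

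Finally I would substitute the auxiliary identity, pull the $\nu$‑sum outside, and apply Lemma~\ref{lem:multhbBmu} to the factor $h_d[B_\mu]\tfrac{T_\nu}{w_\nu}\Ht_\mu[D_\nu]$; this introduces partitions $\gamma\supseteq\nu$ with $|\gamma|=|\nu|+a$, a factor $(-1)^a h_{d-a}[1/M]$, and the coefficients $c^{e_a^\perp}_{\gamma\nu}$. Summing over $\mu\vdash n+r$ and invoking \eqref{eq:enexpansion} again turns $\sum_{\mu}\Ht_\mu[X]\Ht_\mu[D_\gamma]/w_\mu$ into $e_{n+r}[XD_\gamma/M]$. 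Interchanging the sums so that $\gamma$ is outermost, the relation $|\nu|=|\gamma|-a$ makes $h_{n-|\nu|}[-1/M]=h_{n+a-|\gamma|}[-1/M]$ independent of $\nu$, hence a common factor, and the remaining inner sum $\sum_{\nu\subseteq\gamma,\,|\nu|=|\gamma|-a}c^{e_a^\perp}_{\gamma\nu}$ collapses to $e_a[B_\gamma]$ by the same second lemma. Collecting the sign $(-1)^{r+a}$ produces exactly the stated right‑hand side, and the vanishing of $h_{n+a-|\gamma|}[-1/M]$ for $|\gamma|>n+a$ accounts for the stated range of summation.
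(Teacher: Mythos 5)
You follow essentially the same route as the paper: expand the left side in the $\tilde H$-basis, reduce $\langle\tilde H_\mu,e_r^\ast h_n^\ast\rangle_\ast$ to $e_n[B_\mu]$ (you rederive this from the Pieri-sum lemma together with $\langle\tilde H_\nu,h_r\rangle=1$, where the paper simply cites Macdonald's Exercise 2, p.~362), convert $T_\mu e_n[B_\mu]$ into a sum over $\nu$ involving $\tilde H_\mu[D_\nu]$ via reciprocity and the alphabet-addition formula, apply Lemma~\ref{lem:multhbBmu}, and resum over $\mu$ with \eqref{eq:enexpansion}; your ``telescoping'' packaging is the same expansion the paper carries out inline. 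One minor $\epsilon$ slip, of the kind you yourself flagged: since $\sum_\nu z^{|\nu|}\tilde H_\nu[D_\mu]/w_\nu=\sum_k z^k e_k^\ast[D_\mu]=\Omega[-\epsilon z D_\mu/M]$ (not $\Omega[-zD_\mu/M]$), and the partner factor is $\sum_k z^k e_k^\ast[1]=\Omega[-\epsilon z/M]$, the product that actually telescopes is $\Omega[-\epsilon z/M]\,\Omega[-\epsilon z D_\mu/M]=\Omega[-\epsilon z B_\mu]$, whose $z^n$-coefficient is $e_n[B_\mu]$ directly (the $(-1)^n$ you carry separately is already absorbed by the $\epsilon$); with that correction the auxiliary identity and the rest of the argument go through as claimed.
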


\begin{proof}  We begin by introducing an extra set of variables $W$ into the expression
and using the fact that
$e_r^\ast[X] h_n^\ast[X] = \left< e_{n+r}^\ast[XW], h_r[W] e_n[W] \right>$ where the scalar
product is taken with respect to symmetric functions in the variables $W$.
Then equation \eqref{eq:enexpansion} implies
\begin{align}
\Delta_{h_d} \nabla( e_r^\ast h_n^\ast ) &= \left< \Delta_{h_d} \nabla e_{n+r}^\ast[XW], h_r[W] e_n[W] \right>\\
&= \sum_{\mu \vdash n+r} \frac{ h_d[ B_\mu] T_\mu \Ht_\mu[X]}{w_\mu} \left< \Ht_\mu[W], h_r[W] e_n[W] \right>~.
\label{eq:here2}
\end{align}
Now a special case of the Macdonald coefficients that are known (see \cite{Mac95} Exercise 2 p. 362)
is the scalar product $\left< \Ht_\mu[W], h_r[W] e_n[W] \right> = e_n[B_\mu]$.  To apply
our Lemma \ref{lem:multhbBmu} we need an expression with $D_\mu = M B_\mu -1$, hence by the alphabet addition
formulae, we have

\begin{align}
&=\sum_{\mu \vdash n+r} \frac{ h_d[ B_\mu] T_\mu \Ht_\mu[X]}{w_\mu} e_n[B_\mu]\\
&=\sum_{\mu \vdash n+r} \frac{ h_d[ B_\mu] T_\mu \Ht_\mu[X]}{w_\mu} e_n^\ast[(MB_\mu-1) + 1]\\
&=\sum_{\mu \vdash n+r} \sum_{k\geq0} \frac{ h_d[ B_\mu] T_\mu \Ht_\mu[X]}{w_\mu} e_{k}^\ast[D_\mu] e_{n-k}^\ast[1]~.
\label{eq:here3}
\end{align}
We can then expand the expression $e_k^\ast[D_\mu] = \sum_{\nu \vdash k} \frac{\Ht_\nu[D_\mu]}{w_\nu}$ so
that we can apply the reciprocity formula from equation \eqref{eq:GHTreciprocity}.
\begin{align}
&=\sum_{\mu \vdash n+r} \sum_{k\geq0} \sum_{\nu \vdash k}
\frac{ h_d[ B_\mu] T_\mu \Ht_\mu[X]}{w_\mu} \frac{\Ht_\nu[D_\mu]}{w_\nu} e_{n-k}^\ast[1]\\
&=\sum_{\mu \vdash n+r} \sum_{k\geq0} \sum_{\nu \vdash k} (-1)^{n+r+k}
\frac{ h_d[ B_\mu] \Ht_\mu[X]}{w_\mu} \frac{T_\nu \Ht_\mu[D_\nu]}{ w_\nu} e_{n-k}^\ast[1]~.\label{eq:here4}
\end{align}
At this point we can apply equation \eqref{eq:lemmamulthbBmu} and simplify the
power of $-1$ by the expression $(-1)^{n+k} e_{n-k}^\ast[1] = h_{n-k}^\ast[-1]$.
We also combine the sum over $k \geq 0$ and $\nu \vdash k$ to just be a sum over all
partitions $\nu$.  The sum is actually finite because the expression $h_{n-k}^\ast[-1]$ is
equal to $0$ if $|\nu| > n$.  We then interchange the sum over $\nu$ and $\gamma$ then
we have the following manipulation of equation \eqref{eq:here4} to arrive at the expression
stated in the theorem.
\begin{align}
&=\sum_{\mu \vdash n+r} \sum_{\nu} (-1)^{r}
\frac{ \Ht_\mu[X]}{w_\mu}  h_{n-|\nu|}^\ast[-1]\sum_{a \geq 0} (-1)^a  h_{d-a}^\ast\left[ 1\right]
\sum_{\substack{\ga \supseteq \nu\\|\ga|=|\nu|+a}}
\Ht_\mu[D_\ga] \frac{T_\ga}{w_\ga} c_{\ga\nu}^{e_a\perp}\\
&=\sum_{\mu \vdash n+r} \sum_{\ga}
 \sum_{a \geq 0} (-1)^{r+a}
\frac{ \Ht_\mu[X] \Ht_\mu[D_\ga]}{w_\mu}  h_{n+a-|\ga|}^\ast[-1]  h_{d-a}^\ast\left[ 1\right]
 \frac{T_\ga}{w_\ga} \sum_{\substack{\nu \subseteq \ga\\|\nu|=|\ga|-a}} c_{\ga\nu}^{e_a\perp}\\
&= \sum_{\ga}
 \sum_{a \geq 0} (-1)^{r+a}
e_{n+r}^\ast[XD_\ga]  h_{n+a-|\ga|}^\ast[-1]  h_{d-a}^\ast\left[ 1\right]
 \frac{T_\ga}{w_\ga} e_a[B_\ga]~.\qedhere
\end{align}
\end{proof}

Now we are able to apply the $\BC$ and $\BB$ operators to prove Theorem \ref{thm:mainrec}.

To simplify our calculation, it will help to have expressions for the action of
$\BC_m^\ast$ on $e_c\!\!\left[ \frac{X D_\ga}{M} \right]$.  

\begin{lemma}  For a non-negative integer $c$ and integer $m$,
\begin{equation} \label{eq:BCesc}
\BC_m^\ast e_c\!\!\left[ \frac{X D_\ga}{M} \right] = 
\sum_{b \geq 0}
q^{-b+1} (-1)^{m+1}
e_{c-b}\!\!\left[\frac{XD_\ga}{M}\right] h_b[D_\ga]
e_{b-m}\!\!\left[\frac{X}{1-t} \right]~.
\end{equation}
\end{lemma}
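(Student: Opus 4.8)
The plan is to evaluate $\BC_m^\ast$ directly from its definition \eqref{eq:Copsaction} applied to $P[X] = e_c\!\left[\frac{X D_\ga}{M}\right]$, and then read off the coefficient of $u^{-m}$. Substituting $X \mapsto X - \frac{M}{qu}$ turns the argument of $e_c$ into $\frac{X D_\ga}{M} - \frac{D_\ga}{qu}$, so the elementary alphabet-addition formula \eqref{eq:alphaaddition} gives
\[
e_c\!\left[\frac{X D_\ga}{M} - \frac{D_\ga}{qu}\right]
= \sum_{b\ge 0} e_{c-b}\!\left[\frac{X D_\ga}{M}\right] e_b\!\left[\frac{-D_\ga}{qu}\right].
\]
The key simplification is the identity $e_b[-Y] = (-1)^b h_b[Y]$ (expand both sides in the power-sum basis and use $p_k[-Y] = -p_k[Y]$), which together with homogeneity $h_b\!\left[\frac{Y}{qu}\right] = (qu)^{-b} h_b[Y]$ converts the $b$-th summand into $(-1)^b q^{-b} u^{-b}\, e_{c-b}\!\left[\frac{X D_\ga}{M}\right] h_b[D_\ga]$.

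For the second factor in \eqref{eq:Copsaction} I would similarly expand $\Omega\!\left[\frac{-uX}{1-t}\right] = \sum_{j\ge 0} h_j\!\left[\frac{-uX}{1-t}\right] = \sum_{j\ge 0} (-1)^j u^j e_j\!\left[\frac{X}{1-t}\right]$, using $h_j[-Z] = (-1)^j e_j[Z]$ and homogeneity once more. Multiplying the two expansions produces a double sum over $b$ and $j$ in which the power of $u$ is $j-b$; extracting the coefficient of $u^{-m}$ forces $j = b-m$, and after multiplying by the prefactor $-q$ the surviving single sum over $b$ is exactly the one in the statement, with coefficient $(-q)\cdot(-1)^b\cdot(-1)^{b-m}q^{-b} = (-1)^{m+1} q^{-b+1}$.

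There is no genuine obstacle here: the lemma is a mechanical plethystic computation, and the only point requiring care is keeping straight the three sources of signs (from $e_b[-Y]$, from $h_j[-Z]$, and from the leading $-q$) and noting that the sum is automatically finite because $e_{c-b}\!\left[\frac{X D_\ga}{M}\right]$ vanishes for $b>c$ and $e_{b-m}\!\left[\frac{X}{1-t}\right]$ vanishes for $b<m$.
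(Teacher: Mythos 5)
Your proposal is correct and is essentially identical to the paper's proof: the paper likewise substitutes $P = e_c[XD_\ga/M]$ into the definition of $\BC_m^\ast$, applies the alphabet-addition formula to $e_c[(X - M/(qu))D_\ga/M]$ and the expansion of $\Omega[-uX/(1-t)]$ to get a double sum in $u^{a-b}$ with coefficient $q^{-b+1}(-1)^{a+b+1}e_{c-b}[XD_\ga/M]h_b[D_\ga]e_a[X/(1-t)]$, then extracts $u^{-m}$ to force $a = b-m$. Your sign bookkeeping $(-q)(-1)^b(-1)^{b-m}q^{-b} = (-1)^{m+1}q^{-b+1}$ matches.
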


\begin{proof}
\begin{align}
\BC_m^\ast e_c\!\!\left[ \frac{X D_\ga}{M} \right] &= (-q)
e_c\!\!\left[\left(X-\frac{ M}{qu}\right)\frac{D_\ga}{M}  \right]
\Omega\!\!\left[\frac{- u X}{1-t} \right]
\Big|_{u^{-m}}\\
&=\sum_{a \geq 0} \sum_{b \geq 0} u^{a-b}
q^{-b+1} (-1)^{a+b+1}
e_{c-b}\!\!\left[\frac{XD_\ga}{M}\right] h_b\left[D_\ga\right]
e_a\!\!\left[\frac{X}{1-t} \right]
\Big|_{u^{-m}}
\end{align}
so when we take the coefficient of $u^{-m}$ in $u^{a-b}$, then
$-m = a-b$ or $a=b-m$ and we obtain the expression stated in equation \eqref{eq:BCesc}.
\end{proof}

We also develop a
full expression for $\BB_m^\ast$ on the kernel from Proposition \ref{prop:kernel}
because to prove the theorem we will expand the left hand side of \eqref{eq:mainrec}
and need to recognize when we have the right hand side of that equation.

\begin{lemma} \label{lem:bmskern} For all integers $m, r$ and $a$ and for non negative integer $d$,
\begin{align}
\BB_{m}^\ast \Delta_{h_d} \nabla( e_r^\ast h_n^\ast ) &= \sum_{a \geq 0} \sum_{b \geq 0} \sum_{\tau}
(-1)^{r+a+b+m} e_b[D_\tau] e_{n+r-b}\!\!\left[\frac{X D_\tau}{M} \right]  \times\\
&\hskip .6in \times
e_{b-m}\!\!\left[\frac{X}{1-t} \right] h_{d-a}^\ast[1]
h_{n+a-|\tau|}^\ast[-1]  \frac{T_\tau}{w_\tau} e_a[B_\tau]~.
\end{align}
\end{lemma}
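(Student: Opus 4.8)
The strategy is to apply the operator $\BB_m^\ast$ directly to the expression for $\Delta_{h_d}\nabla(e_r^\ast h_n^\ast)$ provided by Proposition \ref{prop:kernel}. In that formula, the only factor depending on the variable set $X$ is $e_{n+r}^\ast[XD_\ga] = e_{n+r}\!\left[\frac{X D_\ga}{M}\right]$; every other factor ($h_{d-a}^\ast[1]$, $h_{n+a-|\ga|}^\ast[-1]$, $T_\ga/w_\ga$, $e_a[B_\ga]$, and the signs) is a scalar in $\QQ(q,t)$ as far as the action on $X$ is concerned. So $\BB_m^\ast$ passes through the sums over $a$ and $\ga$ and acts only on $e_{n+r}\!\left[\frac{X D_\ga}{M}\right]$. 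First I would record the analogue of the preceding lemma for $\BB_m^\ast$: using the formula \eqref{eq:Bopsaction}, namely $\BB_m^\ast P[X] = P\!\left[X + \frac{M}{u}\right]\Omega\!\left[\frac{-uX}{1-t}\right]\big|_{u^{-m}}$, applied to $P[X]=e_c\!\left[\frac{X D_\ga}{M}\right]$, one expands by the alphabet-addition formula \eqref{eq:alphaaddition}:
\begin{align*}
\BB_m^\ast e_c\!\!\left[\frac{X D_\ga}{M}\right]
&= e_c\!\!\left[\frac{X D_\ga}{M} + \frac{D_\ga}{u}\right]\Omega\!\!\left[\frac{-uX}{1-t}\right]\Big|_{u^{-m}}\\
&= \sum_{a\geq 0}\sum_{b\geq 0} u^{a-b} (-1)^{a}\, e_{c-b}\!\!\left[\frac{X D_\ga}{M}\right] h_b[D_\ga]\, e_a\!\!\left[\frac{X}{1-t}\right]\Big|_{u^{-m}},
\end{align*}
where I have used $\Omega\!\left[\frac{-uX}{1-t}\right] = \sum_{a\ge 0} u^a h_a\!\left[\frac{-X}{1-t}\right] = \sum_{a\ge 0}(-u)^a e_a\!\left[\frac{X}{1-t}\right]$. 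Extracting the coefficient of $u^{-m}$ forces $a - b = -m$, i.e. $a = b - m$, giving $\BB_m^\ast e_c\!\left[\frac{X D_\ga}{M}\right] = \sum_{b\ge 0} (-1)^{b-m} e_{c-b}\!\left[\frac{X D_\ga}{M}\right] h_b[D_\ga]\, e_{b-m}\!\left[\frac{X}{1-t}\right]$. (This is the exact $\BB$-counterpart of equation \eqref{eq:BCesc}, but without the powers of $q$, since $\BB_m^\ast$ shifts $X$ by $M/u$ rather than $-M/(qu)$.)

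Next I would substitute this into the Proposition \ref{prop:kernel} expression with $c = n+r$ and $\ga$ renamed to $\tau$, and collect. The sign bookkeeping is the routine part: the $(-1)^{r+a}$ from the kernel combines with the $(-1)^{b-m}$ just produced to give $(-1)^{r+a+b+m}$ (using $(-1)^{-m}=(-1)^m$). Reindexing $e_{(n+r)-b}\!\left[\frac{X D_\tau}{M}\right]$ as $e_{n+r-b}\!\left[\frac{X D_\tau}{M}\right]$ and keeping the factor $e_b[D_\tau]$ (written in the statement as $e_b[D_\tau]$ rather than $h_b[D_\tau]$ — these agree up to the convention being used, which I would check matches the paper's earlier usage, since $h_b[D_\tau]$ appeared in \eqref{eq:BCesc}), together with the carried-through scalars $h_{d-a}^\ast[1]$, $h_{n+a-|\tau|}^\ast[-1]$, $T_\tau/w_\tau$, $e_a[B_\tau]$, and the new factor $e_{b-m}\!\left[\frac{X}{1-t}\right]$, reproduces exactly the claimed double-sum-over-$a,b$, sum-over-$\tau$ formula. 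The sum over $\tau$ is finite for the same reason as in Proposition \ref{prop:kernel}: $h_{n+a-|\tau|}^\ast[-1]$ vanishes unless $|\tau| \le n+a$.

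The main obstacle I anticipate is purely notational rather than conceptual: reconciling the $h_b[D_\tau]$-versus-$e_b[D_\tau]$ discrepancy between the $\BC$-lemma \eqref{eq:BCesc} and the statement of Lemma \ref{lem:bmskern}, and making sure the sign $(-1)^{r+a+b+m}$ is assembled consistently with the $q$-free nature of $\BB_m^\ast$ (contrast the extra $q^{-b+1}(-1)^{m+1}$ appearing in the $\BC$ case, which here is replaced by a plain sign). Once the action of $\BB_m^\ast$ on the single $X$-dependent factor is pinned down as above, the rest is a mechanical substitution-and-collect with no genuine difficulty; there is no need to manipulate Pieri coefficients or reciprocity again, since all of that work is already packaged inside Proposition \ref{prop:kernel}.
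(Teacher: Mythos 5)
Your plan is exactly the paper's: apply $\BB_m^\ast$ to the single $X$-dependent factor $e_{n+r}^\ast[XD_\tau]$ in Proposition \ref{prop:kernel}, using \eqref{eq:Bopsaction} and the alphabet-addition formula, and carry the scalar factors through. The one genuine mistake is in your expansion: writing
$e_c\!\left[\frac{XD_\ga}{M} + \frac{D_\ga}{u}\right] = \sum_b e_{c-b}\!\left[\frac{XD_\ga}{M}\right] e_b\!\left[\frac{D_\ga}{u}\right] = \sum_b u^{-b} e_{c-b}\!\left[\frac{XD_\ga}{M}\right] e_b[D_\ga]$
by \eqref{eq:alphaaddition} gives $e_b[D_\ga]$, not $h_b[D_\ga]$. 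The $h_b[D_\ga]$ in \eqref{eq:BCesc} is not a ``convention'' — it is a genuine algebraic difference caused by the \emph{negative} shift in $\BC_m^\ast$: there the argument is $\frac{XD_\ga}{M} - \frac{D_\ga}{qu}$, and $e_b\!\left[-\frac{D_\ga}{qu}\right] = (-1)^b (qu)^{-b} h_b[D_\ga]$ via $e_b[-Y] = (-1)^b h_b[Y]$. Since $\BB_m^\ast$ shifts by $+M/u$ with no minus sign and no $q$, you get $e_b[D_\tau]$, matching the lemma's statement. With that correction the sign bookkeeping $(-1)^{a} = (-1)^{b-m} = (-1)^{b+m}$ is right and the substitution-and-collect goes through exactly as you describe.
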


\begin{proof}
As we did in the previous lemma, we will first develop an expression for the action
of $\BB_m^\ast$ on $e_{n+r}^\ast[X D_\ga]$.
\begin{align}
\BB_m^\ast e_{n+r}^\ast[X D_\tau]
&= e_{n+r}\!\!\left[\left(X+\frac{M}{u}\right) \frac{D_\tau}{M} \right]\Omega\!\!\left[\frac{-uX}{1-t} \right]
\Big|_{u^{-m}}\\
&=\sum_{a \geq 0} \sum_{b \geq 0}
u^{a-b} e_b[D_\tau] e_{n+r-b}\!\!\left[\frac{X D_\tau}{M} \right]  h_a\!\!\left[\frac{-X}{1-t} \right]\Big|_{u^{-m}}~.
\end{align}
Now when we take the coefficient of $u^{-m}$ in this expression $-m = a-b$, hence $a = b-m$
and our expression becomes
\begin{align}\label{eq:BBonecXDtauoM}
\BB_m^\ast e_{n+r}^\ast[X D_\tau]&=\sum_{b \geq 0} (-1)^{b+m}
e_b[D_\tau] e_{n+r-b}\!\!\left[\frac{X D_\tau}{M} \right]  e_{b-m}\!\!\left[\frac{X}{1-t} \right]~.
\end{align}
Now we apply that expression to the kernel that we derived in Proposition \ref{prop:kernel}.
\begin{align}
\BB_m^\ast &\Delta_{h_d} \nabla( e_r^\ast h_n^\ast )\\ &= \sum_{a \geq 0} \sum_{\tau}
(-1)^{r+a} \BB_m^\ast e_{n+r}\!\!\left[\frac{X D_\tau}{M}\right] h_{d-a}^\ast[1]
h_{n+a-|\tau|}^\ast[-1]  \frac{T_\tau}{w_\tau} e_a[B_\tau]\\
&= \sum_{a \geq 0} \sum_{b \geq 0} \sum_{\tau}
(-1)^{r+a+b+m} e_{n+r-b}\!\!\left[\frac{X D_\tau}{M} \right] e_b[D_\tau] \times\\
&\hskip .6in \times
e_{b-m}\!\!\left[\frac{X}{1-t} \right] h_{d-a}^\ast[1]
h_{n+a-|\tau|}^\ast[-1]  \frac{T_\tau}{w_\tau} e_a[B_\tau]~.\qedhere
\end{align}
\end{proof}

We are now prepared to prove Theorem \ref{thm:mainrec} by a direct calculation using the results we
have derived above.

\begin{proof} (of Theorem \ref{thm:mainrec})
We begin by applying the operator $\BC_{k+1}$ to the expression in Proposition \ref{prop:kernel}.
Equation \eqref{eq:BCesc} with $m \rightarrow k+1$, $c \rightarrow n+r$ says that it is equal to
\begin{align}
\BC_{k+1}^\ast \Delta_{h_d} \nabla( e_r^\ast h_n^\ast ) &= \sum_{a \geq 0} \sum_{\gamma}\sum_{b \geq 0}
q^{-b+1} (-1)^{r+a+k}
e_{n+r-b}^\ast[XD_\ga] h_b[D_\ga] \times \\
& \hskip .6in \times e_{b-k-1}\!\!\left[\frac{X}{1-t} \right]
h_{d-a}^\ast[1]
h_{n+a-|\ga|}^\ast[-1] \frac{T_\ga}{w_\ga} e_a[B_\ga]~.\label{eq:here5}
\end{align}
Now we know that since $k\geq 0$, then $b \geq 1$ since $e_{b-k-1} = 0$ for $b=0$.  In fact, it may be helpful to make the replacement $b \rightarrow b+1$ in \eqref{eq:here5} and we can apply equation \eqref{eq:hkDga} to $h_{b+1}[D_\ga]$.
\begin{align}
&= \sum_{a \geq 0} \sum_{\gamma}\sum_{b \geq 0}
q^{-b} (-1)^{r+a+k}
e_{n+r-b-1}^\ast[XD_\ga] h_{b+1}[D_\ga] \times \\
& \hskip .6in \times e_{b-k}\!\!\left[\frac{X}{1-t} \right]
h_{d-a}^\ast[1]
h_{n+a-|\ga|}^\ast[-1] \frac{T_\ga}{w_\ga} e_a[B_\ga]\\
&=
\sum_{a \geq 0} \sum_{\gamma}\sum_{b \geq 0} \sum_{\tau \rightarrow \ga}
 (-1)^{r+a+k}M t^{b}  c_{\ga\tau} \left( \frac{T_\ga}{T_\tau} \right)^{b}
e_{n+r-b-1}^\ast[XD_\ga] \times \label{eq:here6a}\\
& \hskip .6in \times e_{b-k}\!\!\left[\frac{X}{1-t} \right]
h_{d-a}^\ast[1]
h_{n+a-|\ga|}^\ast[-1] \frac{T_\ga}{w_\ga} e_a[B_\ga]\label{eq:here6b}\\
&\hskip .2in-\chi(k=0)\sum_{a \geq 0} \sum_{\gamma}
(-1)^{r+a} e_{n+r-1}^\ast[XD_\ga] 
h_{d-a}^\ast[1]
h_{n+a-|\ga|}^\ast[-1] \frac{T_\ga}{w_\ga} e_a[B_\ga]~.\label{eq:here6}
\end{align}
Notice already that equation \eqref{eq:here6} is equal to $+\chi(k=0) \Delta_{h_d} \nabla( e_{r-1}^\ast h_n^\ast)$.
Then it remains to expand equations \eqref{eq:here6a}--\eqref{eq:here6b}.  For this we use
$B_\ga = B_\tau + \frac{T_\ga}{T_\tau}$ and $D_\ga = D_\tau + M \frac{T_\ga}{T_\tau}$.
In this case $e_a[B_\ga] = e_a[B_\tau] +
e_{a-1}[B_\tau]\frac{T_\ga}{T_\tau}$ and 
$e_n^\ast[XD_\ga] = \sum_{c \geq 0} e_{n-c}^\ast[XD_\tau] e_c\!\!\left[X\frac{T_\ga}{T_\tau}\right]$.
We will also use
the identity $t^b e_{b-k}\!\!\left[\frac{X}{1-t} \right] = t^k e_{b-k}\!\!\left[\frac{tX}{1-t} \right]$
to show that \eqref{eq:here6a}--\eqref{eq:here6b} is equivalent to the following expression:
\begin{align}
&=
t^k \sum_{a \geq 0} \sum_{\gamma}\sum_{b \geq 0} \sum_{\tau \rightarrow \ga}
\sum_{c \geq 0}
 (-1)^{r+a+k}M c_{\ga\tau} \left( \frac{T_\ga}{T_\tau} \right)^{b}
e_{n+r-b-1-c}^\ast[XD_\tau] e_c\!\!\left[X\frac{T_\ga}{T_\tau}\right] \times \label{eq:here7a}\\
& \hskip .6in \times e_{b-k}\!\!\left[\frac{tX}{1-t} \right]
h_{d-a}^\ast[1]
h_{n+a-|\ga|}^\ast[-1] \frac{T_\ga}{w_\ga} \left(e_{a}[B_\tau] + e_{a-1}[B_\tau]\frac{T_\ga}{T_\tau}\right)~.\label{eq:here7b}
\end{align}
We will regroup the $T_\ga/T_\tau$ terms and break the expression into two separate sums.  We will also
replace $c_{\ga\tau}$ with $d_{\ga\tau}$ using equation \eqref{eq:cmunudmunurel}.
Then equations \eqref{eq:here7a}--\eqref{eq:here7b} are equivalent to
\begin{align}
&=
t^k \sum_{a \geq 0} \sum_{\gamma}\sum_{b \geq 0} \sum_{\tau \rightarrow \ga}
\sum_{c \geq 0}
 (-1)^{r+a+k}d_{\ga\tau} \left( \frac{T_\ga}{T_\tau} \right)^{b+c+1}
e_{n+r-b-1-c}^\ast[XD_\tau] e_c[X] \times \label{eq:here8a}\\
& \hskip .6in \times e_{b-k}\!\!\left[\frac{tX}{1-t} \right]
h_{d-a}^\ast[1]
h_{n+a-|\ga|}^\ast[-1] \frac{T_\ga}{w_\tau} e_{a}[B_\tau]~.\label{eq:here8b}\\
&\hskip .2in + t^k \sum_{a \geq 0} \sum_{\gamma}\sum_{b \geq 0} \sum_{\tau \rightarrow \ga}
\sum_{c \geq 0}
 (-1)^{r+a+k}d_{\ga\tau} \left( \frac{T_\ga}{T_\tau} \right)^{b+c+2}
e_{n+r-b-1-c}^\ast[XD_\tau] e_c[X] \times \label{eq:here8c}\\
& \hskip .6in \times e_{b-k}\!\!\left[\frac{tX}{1-t} \right]
h_{d-a}^\ast[1]
h_{n+a-|\ga|}^\ast[-1] \frac{T_\tau}{w_\tau} e_{a-1}[B_\tau]~.\label{eq:here8d}
\end{align}
In both of these sums, we can interchange the sum over partitions $\gamma$ and then over $\tau \rightarrow \ga$
to a sum over partitions $\tau$ and then over $\ga \leftarrow \tau$.  In this case the sums
of the form $\sum_{\ga \leftarrow \tau} d_{\ga\tau}\left( \frac{T_\ga}{T_\tau} \right)^{n} = (-1)^{n-1} e_{n-1}[D_\tau]$
(where $n \in \{ b+c+1, b+c+2\}$)
by equation \eqref{eq:esDga} since in both of the expressions we have $n>1$ so there is no $\chi(n=0)$ term.
Equations \eqref{eq:here8a}--\eqref{eq:here8d}
are equivalent to
\begin{align}
&=
t^k \sum_{a \geq 0} \sum_{b \geq 0} 
\sum_{c \geq 0} \sum_{\tau}
 (-1)^{r+a+k+b+c}e_{b+c}[D_\tau]
e_{n+r-b-1-c}^\ast[XD_\tau] e_c[X] \times \label{eq:here9a}\\
& \hskip .6in \times e_{b-k}\!\!\left[\frac{tX}{1-t} \right]
h_{d-a}^\ast[1]
h_{n+a-|\tau|-1}^\ast[-1] \frac{T_\tau}{w_\tau} e_{a}[B_\tau]~.\label{eq:here9b}\\
&\hskip .2in + t^k \sum_{a \geq 0} \sum_{b \geq 0} \sum_{c \geq 0} \sum_{\tau}
 (-1)^{r+a+k+b+c+1}e_{b+c+1}[D_\tau]
 e_{n+r-b-1-c}^\ast[XD_\tau] e_c[X] \times \label{eq:here9c}\\
& \hskip .6in \times e_{b-k}\!\!\left[\frac{tX}{1-t} \right]
h_{d-a}^\ast[1]
h_{n+a-|\tau|-1}^\ast[-1] \frac{T_\tau}{w_\tau} e_{a-1}[B_\tau]~.\label{eq:here9d}
\end{align}
Instead of summing over $b\geq0$ and $c\geq 0$, we will let $v=b+c$ and sum over $v \geq 0$ and
$0 \leq b \leq v$ and let $c = v-b$.  These replacements make
equations \eqref{eq:here9a}--\eqref{eq:here9d} equivalent to
\begin{align}
&=
t^k \sum_{a \geq 0} \sum_{v \geq 0} 
\sum_{b=0}^v \sum_{\tau}
 (-1)^{r+a+k+v}e_{v}[D_\tau]
e_{n+r-v-1}^\ast[XD_\tau] e_{v-b}[X] \times \label{eq:here10a}\\
& \hskip .6in \times e_{b-k}\!\!\left[\frac{tX}{1-t} \right]
h_{d-a}^\ast[1]
h_{n+a-|\tau|-1}^\ast[-1] \frac{T_\tau}{w_\tau} e_{a}[B_\tau]~.\label{eq:here10b}\\
&\hskip .2in + t^k \sum_{a \geq 0} \sum_{v \geq 0} \sum_{b=0}^v \sum_{\tau}
 (-1)^{r+a+k+v+1}e_{v+1}[D_\tau]
 e_{n+r-v-1}^\ast[XD_\tau] e_{v-b}[X] \times \label{eq:here10c}\\
& \hskip .6in \times e_{b-k}\!\!\left[\frac{tX}{1-t} \right]
h_{d-a}^\ast[1]
h_{n+a-|\tau|-1}^\ast[-1] \frac{T_\tau}{w_\tau} e_{a-1}[B_\tau]~.\label{eq:here10d}
\end{align}

Notice now that the sub expression
\begin{equation}
\sum_{b=0}^v e_{v-b}[X] e_{b-k}\!\!\left[\frac{tX}{1-t} \right]
= e_{v-k}\!\!\left[X + \frac{tX}{1-t} \right]
= e_{v-k}\!\!\left[\frac{X}{1-t} \right]~.
\end{equation}
%
By replacing this in the sum, equations \eqref{eq:here10a}--\eqref{eq:here10d} are equivalent to
\begin{align}
&=
t^k \sum_{a \geq 0} \sum_{v \geq 0} \sum_{\tau}
 (-1)^{r+a+k+v}e_{v}[D_\tau]
e_{n+r-v-1}^\ast[XD_\tau] \times \label{eq:here11a}\\
& \hskip .6in \times e_{v-k}\!\!\left[\frac{X}{1-t} \right]
h_{d-a}^\ast[1]
h_{n+a-|\tau|-1}^\ast[-1] \frac{T_\tau}{w_\tau} e_{a}[B_\tau]~.\label{eq:here11b}\\
&\hskip .2in + t^k \sum_{a \geq 0} \sum_{v \geq 0} \sum_{\tau}
 (-1)^{r+a+k+v+1}e_{v+1}[D_\tau]
 e_{n+r-v-1}^\ast[XD_\tau] \times \label{eq:here11c}\\
& \hskip .6in \times e_{v-k}\!\!\left[\frac{X}{1-t} \right]
h_{d-a}^\ast[1]
h_{n+a-|\tau|-1}^\ast[-1] \frac{T_\tau}{w_\tau} e_{a-1}[B_\tau]~.\label{eq:here11d}
\end{align}
We then note that equation \eqref{eq:here11a}--\eqref{eq:here11b} is the right hand
side of the expression derived in Lemma \ref{lem:bmskern} with $m \rightarrow k$ and
$n \rightarrow n-1$ and hence is equal to $t^k \BB_k^\ast \Delta_{h_{d}} \nabla( e_{r}^\ast h_{n-1}^\ast )$.
As well we have that \eqref{eq:here11c}--\eqref{eq:here11d}
with $v \rightarrow v-1$ and $a \rightarrow a+1$ is equivalent to the expression
\begin{align}
&t^k \sum_{a \geq -1} \sum_{v \geq 1} \sum_{\tau}
 (-1)^{r+a+k+v+1}e_{v}[D_\tau]
 e_{n+r-v}^\ast[XD_\tau] \times \label{eq:here12c}\\
& \hskip .6in \times e_{v-k-1}\!\!\left[\frac{X}{1-t} \right]
h_{d-a-1}^\ast[1]
h_{n+a-|\tau|}^\ast[-1] \frac{T_\tau}{w_\tau} e_{a}[B_\tau]\label{eq:here12d}
\end{align}
and with a few exceptions of terms which are equal to $0$ (i.e. $a=-1$ and $v=0$) this
is precisely the right hand side of the expression in Lemma \ref{lem:bmskern}
with $m \rightarrow k+1$, $d \rightarrow d-1$ and hence is
equal to $t^k \BB_{k+1}^\ast\Delta_{h_{d-1}} \nabla( e_{r}^\ast h_{n}^\ast)$.
\end{proof}
\end{section}

\begin{section}{Remarks}
Section 7 of \cite{HRW15} has two conjectures that we do not resolve here.
These techniques may potentially be adapted to prove them as well, but
some additional work remains.

The first conjecture which does not follow directly from Theorem
\ref{thm:mainresult} is a symmetry property that is part of
Conjecture 7.1 in \cite{HRW15}.
\begin{conj}  For non-negative integers $k, \ell$ and $n > k+\ell$,
\begin{equation}
\left< \Delta_{h_\ell} \nabla e_{n - \ell}, s_{k+1,1^{n-\ell-k-1}} \right> = 
\left< \Delta_{h_k} \nabla e_{n - k}, s_{\ell+1,1^{n-\ell-k-1}} \right>~.
\end{equation}
\end{conj}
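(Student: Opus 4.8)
The plan is to deduce the conjecture from Theorem~\ref{thm:mainresult} and then recognise it as the invariance of the four-variable Catalan polynomial under $z\leftrightarrow w$. First, since $\sum_{\alpha\models m}C_\alpha=e_m$ (take $\alpha$ empty in \eqref{eq:BCrelation}, or \cite{HMZ12}), summing Theorem~\ref{thm:mainresult} over all compositions $\alpha\models n-\ell$ gives
\[
\left<\Delta_{h_\ell}\nabla e_{n-\ell},\,s_{k+1,1^{n-\ell-k-1}}\right>
=\sum_{\alpha\models n-\ell}\Catbar^{\Rise}_{\alpha,k,\ell}(q,t)
=\sum_{\substack{D\in\DD^\circ_n\\ \peak_\circ(D)=k,\ \rise_\circ(D)=\ell}}q^{\dinv_\circ(D)}t^{\area_\circ(D)}
=[z^kw^\ell]\,\Catbar_n(q,t,z,w),
\]
where the middle step uses that $\alpha^{\Rise}(D)$ is a composition of $|D|-\rise_\circ(D)$, so $\rise_\circ(D)=\ell$ automatically. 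The right-hand side of the conjecture is obtained by swapping $k$ and $\ell$, hence equals $[z^kw^\ell]\Catbar_n(q,t,w,z)$, and the conjecture becomes exactly
\[
\Catbar_n(q,t,z,w)=\Catbar_n(q,t,w,z)
\]
(the hypothesis $n>k+\ell$ only ensures the hook partitions are genuine).

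Next I would rewrite the left side as an explicit Macdonald sum. Using hook Jacobi--Trudi as in Corollary~\ref{cor:sfreccurence}, the identity $\left<\Ht_\mu,h_ae_b\right>=e_b[B_\mu]$ (used in Proposition~\ref{prop:kernel}), and $e_m=\sum_{\mu\vdash m}w_\mu^{-1}\Ht_\mu[X]\,\Ht_\mu[M]$ (from \eqref{eq:enexpansion} with the second alphabet set to $M$), one computes, for $m=n-\ell$,
\[
\sum_{k\ge0}z^k\left<\Delta_{h_\ell}\nabla e_m,s_{k+1,1^{m-k-1}}\right>
=\frac{1}{1+z}\sum_{\mu\vdash m}\frac{h_\ell[B_\mu]\,T_\mu\,\Ht_\mu[M]}{w_\mu}\prod_{c\in\mu}\bigl(z+q^{a'_\mu(c)}t^{\ell'_\mu(c)}\bigr),
\]
since $\sum_{i\ge0}z^{m-i}e_i[B_\mu]=\prod_{c\in\mu}(z+q^{a'_\mu(c)}t^{\ell'_\mu(c)})$. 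Summing over $\ell$ with weight $w^\ell$, using $\sum_{j\ge0}w^jh_j[B_\mu]=\prod_{c\in\mu}(1-wq^{a'_\mu(c)}t^{\ell'_\mu(c)})^{-1}$, and absorbing the changing exponent by $[w^{n-|\mu|}]F(w)=[w^n]w^{|\mu|}F(w)$, one obtains the compact form
\[
\Catbar_n(q,t,z,w)=\frac{1}{1+z}\,[w^n]\sum_{\mu}\frac{T_\mu\,\Ht_\mu[M]}{w_\mu}\prod_{c\in\mu}\frac{w\bigl(z+q^{a'_\mu(c)}t^{\ell'_\mu(c)}\bigr)}{1-wq^{a'_\mu(c)}t^{\ell'_\mu(c)}},
\]
the sum over all partitions (those of size $>n$ not contributing to $[w^n]$). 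By Macdonald--Koornwinder reciprocity $\Ht_\mu[1-v]=\prod_{c\in\mu}(1-vq^{a'_\mu(c)}t^{\ell'_\mu(c)})$, so both the $z$- and the $w$-dependence are carried by $\Ht_\mu$ at ``$1-v$'' alphabets, which is the natural habitat of the reciprocity \eqref{eq:GHTreciprocity} and the Garsia--Haglund summation formula \eqref{eq:genPierisums}.

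The remaining and genuinely hard task is to show the displayed rational function is symmetric in $z$ and $w$; my intended attack is to massage the $\mu$-sum with reciprocity --- trading $\Ht_\mu$-evaluations between the two parameters exactly as \eqref{eq:GHTreciprocity} is used in Lemma~\ref{lem:multhbBmu} and Proposition~\ref{prop:kernel} to move the $D_\mu$'s around --- into a form that is \emph{manifestly} $z\leftrightarrow w$ symmetric, for instance a single scalar product $\bigl\langle(\text{symmetric kernel in }z,w),e_n\bigr\rangle$ or a double sum over subdiagrams $\lambda\subseteq\mu$ with a symmetric summand. This is where I expect the difficulty to lie: as written the expression is conspicuously asymmetric --- $z$ enters through the polynomial $\prod_c(z+q^{a'_\mu}t^{\ell'_\mu})$ while $w$ enters through the power series $\prod_c(1-wq^{a'_\mu}t^{\ell'_\mu})^{-1}$, and the extraction $[w^n]$ further breaks the surface symmetry --- so a manifestly symmetric closed form would be a new identity, not a formal consequence of the tools assembled here. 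A bijective proof seems equally out of reach: one would need a $(\dinv_\circ,\area_\circ)$-preserving bijection of $\DD^\circ_n$ exchanging $\circ$-decorated peaks with $\circ$-decorated double rises, but these enter $\dinv_\circ$ and $\area_\circ$ through different corrections ($\dinv_\circ=\dinv-\sum_{i\in\Peak_\circ}b_i$ versus $\area_\circ=\area-\sum_{i\in\Rise_\circ}a_{i+1}$) and no such map is known; and the recurrences of Propositions~\ref{prop:Catrec1}--\ref{prop:Catrec2} and Corollary~\ref{cor:sfreccurence} treat the peak index and the double-rise index structurally differently --- one lives on the creation operators $\BC_m,\BB_m$, the other on $\Delta_{h_\ell}$ --- so induction does not transfer. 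Finally, the \emph{second} combinatorial interpretation of $\langle\Delta_{h_\ell}\nabla e_{n-\ell},s_{k+1,1^{n-\ell-k-1}}\rangle$ from Section~7 of \cite{HRW15}, which would display the $k\leftrightarrow\ell$ symmetry directly, is the one flagged in the Introduction as incompatible with the compositional approach underlying the present proof, and would require separate techniques.
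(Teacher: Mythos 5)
The paper does not prove this statement: it appears as an open \texttt{conj} environment in the Remarks section, and the author explicitly says only that he ``hopes that an algebraic proof of this result will follow from the calculations in the previous section.'' Your proposal is honest about exactly the same thing --- you carry out several correct reductions and then stop where the real difficulty lies, so there is no claim-versus-gap mismatch between you and the paper. Concretely, the reduction is sound: since $\sum_{\alpha\models m}C_\alpha=e_m$ (take $\alpha$ empty in~\eqref{eq:BCrelation}), summing Theorem~\ref{thm:mainresult} over $\alpha\models n-\ell$ does give $\langle\Delta_{h_\ell}\nabla e_{n-\ell},s_{k+1,1^{n-\ell-k-1}}\rangle=[z^kw^\ell]\Catbar_n(q,t,z,w)$, and the conjecture collapses to the $z\leftrightarrow w$ invariance of $\Catbar_n$. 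Your Macdonald-sum identity also checks out: expanding the hook Schur function as $\sum_r(-1)^rh_{k+1+r}e_{m-k-1-r}$, using $\langle\Ht_\mu,h_ae_b\rangle=e_b[B_\mu]$, and summing the resulting alternating double sum over $k$ telescopes to $\frac{1}{1+z}\prod_{c\in\mu}(z+q^{a'_\mu(c)}t^{\ell'_\mu(c)})$ (the $T_\mu$ boundary terms from the two tails cancel), from which the $[w^n]$-extraction formula follows by packaging the constraint $|\mu|=n-\ell$.

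Your diagnosis of the obstruction is also the right one: in that expression $z$ enters through the polynomial $\prod_c(z+q^{a'_\mu(c)}t^{\ell'_\mu(c)})$ while $w$ enters through the geometric series $\prod_c(1-wq^{a'_\mu(c)}t^{\ell'_\mu(c)})^{-1}$ together with a coefficient extraction, and neither reciprocity~\eqref{eq:GHTreciprocity} nor the Garsia--Haglund summation~\eqref{eq:genPierisums}, as deployed in Lemma~\ref{lem:multhbBmu} and Proposition~\ref{prop:kernel}, produces a manifestly symmetric form. Likewise, the recurrences of Propositions~\ref{prop:Catrec1}--\ref{prop:Catrec2} and Corollary~\ref{cor:sfreccurence} do indeed treat $k$ (through $\BC_m$, $\BB_m$) and $\ell$ (through $\Delta_{h_\ell}$) asymmetrically, so no induction transfers. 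In short: the paper supplies no proof for this conjecture, your reductions are correct as far as they go, and you correctly identify that closing the gap would require a genuinely new identity or bijection, not a formal consequence of the paper's toolkit.
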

A proof of this result was announced recently by Michele D'Adderio and Anna Vanden Wyngaerd
\cite{AW17}.

A second conjecture is a combinatorial interpretation in terms of a second
type of decorated Dyck paths where vertical edges
except for the bottom most left one can be decorated.
This second combinatorial interpretation does
not seem to be compatible with the
compositional refinement as it is currently formulated.
However it is conjectured that it is compatible with
coefficients of the form $\left< \Delta_{h_k} \nabla E_{n-k,p}, s_{\ell+1, 1^{n-k-\ell-1}}
\right>$ (see Conjecture 7.2 of \cite{HRW15}) where 
$$E_{n-k,r} = \sum_{\substack{\alpha \models n-k\\\ell(\alpha)=r}} C_\alpha~.$$
Potentially recurrences on these coefficients 
that are compatible with the interpretation with this other type of decorated Dyck path
can be derived from Corollary \ref{cor:sfreccurence}.

The Delta Conjecture is a combinatorial interpretation for the
coefficients $\langle \Delta_{e_\ell}e_n, h_\lambda \rangle$ for $\lambda \vdash n$.
We would guess that a
different approach than was developed for the
Shuffle Conjecture may be needed because
the coefficients $\langle \Delta_{e_\ell} C_\alpha, h_\lambda \rangle$
are not even polynomials in $q$ and $t$ unless $\ell = |\alpha|$ (and this
case is the Compositional Shuffle Conjecture).

It seems possible that one of the combinatorial interpretations for the $q,t,w,z$-Catalan
might extend to give an interpretation for
$\langle \Delta_{h_\ell} \nabla C_\alpha, h_\lambda \rangle$.
In this case, the techniques developed by
Carlsson and Mellit \cite{CM15} might prove to be useful 
since those coefficients are compatible with the compositional refinement.
The extensions ideally are leading us in the direction of explaining
the following conjecture.

\begin{conj} For non-negative integers $\ell$ and $n > k+\ell$ and a composition
$\alpha \models n-\ell$ and a partition $\lambda \vdash n-\ell$,
\begin{equation}
\left< \Delta_{s_\mu} \nabla C_\alpha, s_\lambda \right>
\end{equation}
are polynomials in $q$ and $t$ with non-negative integer coefficients.
\end{conj}

While there has been significant recent progress on the monomial expansions
of $\nabla(f)$ for various symmetric functions $f$, passing to the Schur
expansions is an important major hurdle.

\end{section}


\begin{thebibliography}{1}

\bibitem[AW17]{AW17} M. D'Adderio, A. Vanden Wyngaerd
\textit{Decorated Dyck paths, the Delta conjecture, and a new $q,t$-square},
{\tt arXiv:1709.08736}

\bibitem[BG99]{BG99} F. Bergeron and A. M. Garsia, 
\textit{Science fiction and Macdonald's polynomials}, Algebraic methods and $q$-special functions,
(Montr\'eal, QC, 1996), CRM Proc. Lecture Notes, vol. 22, Amer. Math. Soc., Providence, RI, 1999, pp. 1--52.

\bibitem[BGHT99]{BGHT99}
F.~Bergeron, A.~M. Garsia, M.~Haiman, and G.~Tesler, \textit{Identities and
  positivity conjectures for some remarkable operators in the theory of
  symmetric functions}, Methods Appl. Anal. \textbf{6} (1999), 363--420.

\bibitem[BGLX16]{BGLX16} F. Bergeron, A. Garsia, E. Leven, G. Xin, 
\textit{Compositional $(km,kn)$-Shuffle Conjectures},  
Inter. Math. Res. Not., Vol. 2016, 4229Ð4270 doi:10.1093/imrn/rnv272

\bibitem[CL06]{CL06} M. Can, N. Loehr, 
\textit{A proof of the $q,t$-square conjecture},
J. of Comb. Th., Series A
Volume 113, Issue 7, October 2006, Pages 1419--1434.

\bibitem[CM15]{CM15} E. Carlsson and A. Mellit.
\textit{A proof of the shuffle conjecture}, J. Amer. Math. Soc.,
to appear, {\tt https://doi.org/10.1090/jams/893}.

\bibitem[DGZ13]{DGZ13} A. Duane, A. M. Garsia, M. Zabrocki,
\textit{A new ``dinv'' arising from the two part case of the Shuffle Conjecture},
Journal of Alg. Comb., Volume 37, Issue 4 (2013), Page 683--715.
(DOI: 10.1007/s10801-012-0382-0)

\bibitem[EHKK03]{EHKK03} E. Egge, J. Haglund, D. Kremer and K. Killpatrick,
\textit{A Schr\"oder generalization of Haglund's statistic on Catalan paths}.
Electron. J. of Combin., 10 (2003), Research Paper 16, 21 pages (electronic)

\bibitem[G92]{G92} A. M. Garsia, \textit{Orthogonality of Milne's polynomials
and raising operators}, Discrete Math. {\bf 99},  Issue 1-3 (1992), 247--264.

\bibitem[GarHag02]{GarHag02}
A.~M. Garsia and J.~Haglund, \textit{A proof of the $q,t$-{C}atalan positivity conjecture}, Discrete
  Math. \textbf{256} (2002), 677--717.
  
\bibitem[GarHai93]{GarHai93} A. Garsia and M. Haiman, 
\textit{A graded representation model for Macdonald's polynomials}
Proc. Nat. Acad. Sci. U.S.A. 90 (1993), no. 8, 3607--3610.
  
\bibitem[GarHai95]{GarHai95} A. Garsia and M. Haiman, 
\textit{A random q, t-hook walk and a sum of Pieri coefficients},
J. Combin. Theory, Ser. A 82 (1998), 74--111.
  
\bibitem[GarHai96]{GarHai96} A. Garsia and M. Haiman, \textit{A remarkable 
$q,t$-Catalan sequence and $q$-Lagrange inversion}, J. Algebraic Combinatorics
{\bf 5}, (1996), 191--244.

\bibitem[GHT99]{GHT99} A. M. Garsia, M. Haiman, and G. Tesler, 
\textit{Explicit plethystic formulas for Macdonald $q,t$-Kostka coefficients}, 
S\'em. Lothar. Combin. 42 (1999), Art. B42m, 45 pp. (electronic),
The Andrews Festschrift (Maratea, 1998).

\bibitem[GHXZ16]{GHXZ16} A. M. Garsia, J. Haglund, G. Xin, M. Zabrocki,
\textit{Some new applications of the Stanley-Macdonald Pieri Rules},
for Stanley@70 (MIT, June 23-27, 2014).

\bibitem[GXZ10]{GXZ10}
A.~Garsia, G.~Xin, and M.~Zabrocki, \textit{Hall-Littlewood operators in the
theory of parking functions and diagonal harmonics}, 
Int. Math. Res. Notices, 2012 (6): 1264--1299. doi: 10.1093/imrn/rnr060

\bibitem[GXZ14a]{GXZ14a} A.~Garsia, G.~Xin, and M.~Zabrocki, 
\textit{A three shuffle case of the compositional parking function conjecture},
Journal of Combinatorial Theory, Series A 123/1 (2014), pp. 202--238.

\bibitem[GXZ14b]{GXZ14b} A.~Garsia, G.~Xin, and M.~Zabrocki, 
\textit{Proof of the 2-part Compositional Shuffle Conjecture},
Progress in Mathematics (Book 257), Birkh\"auser; 2014, 538 pages, pp. 227--257.

\bibitem[Hag03]{Hag03}
J.~Haglund, \textit{Conjectured statistics for the $q,t$-Catalan numbers},
Adv. Math. \textbf{175} (2003), no.~2, 319--334.

\bibitem[Hag04]{Hag04}
J.~Haglund, \textit{A proof of the {$q,t$}-{S}chr{\"o}der conjecture}, Internat.
Math. Res. Notices \textbf{11} (2004), 525--560.

\bibitem[Hag08]{Hag08}
J. Haglund, \textit{The $q,t$-Catalan numbers and the space of
diagonal harmonics}, University Lecture Series, vol.~41, American
Mathematical Society, Providence, RI, 2008, With an appendix on the
combinatorics of Macdonald polynomials.

\bibitem[HMZ12]{HMZ12} J. Haglund, J. Morse, M. Zabrocki,
\textit{A compositional shuffle conjecture specifying touch points of the Dyck path},
(arXiv:1008.0828v2), Canad. J. of Math. 64(2012), no. 4, 822--844.
  
\bibitem[Hai94]{Hai94} M. Haiman, \textit{Conjectures On The Quotient Ring By
Diagonal Invariants}, J. Algebraic Combin, Volume 3,  Issue 1 (1994), 17--76.

\bibitem[Hai02]{Hai02}
M.~Haiman, \textit{Vanishing theorems and character formulas for the {H}ilbert
  scheme of points in the plane}, Invent. Math. \textbf{149} (2002), 371--407.

\bibitem[HHLRU05]{HHLRU05}
J.~Haglund, M.~Haiman, N.~Loehr, J.~B. Remmel, and A.~Ulyanov, \textit{A
  combinatorial formula for the character of the diagonal coinvariants}, Duke
  J. Math. \textbf{126} (2005), 195--232.

\bibitem[Hic10]{Hic10}
A.~Hicks, \textit{Two parking function bijections refining the $q,t$-{C}atalan
  and {S}chr\"{o}der recursions}, Int. Math. Res. Notices, 
  Volume 2012, No 16, July 2011.
  
\bibitem[Hic14]{Hic14}
A.~Hicks, \textit{A parking function bijection supporting the Haglund-Morse-Zabrocki conjectures},
Int. Math. Res. Notices, Volume 2014, No 7.
  
\bibitem[HRW15]{HRW15} J. Haglund, J. Remmel, A. Wilson,
\textit{The Delta Conjecture},
Trans. Amer. Math. Soc., {\tt arXiv:1509.07058v3}~.

\bibitem[LW07]{LW07} N. A. Loehr and G. Warrington,
\textit{Square $q, t$-Lattice Paths and $\nabla(p_n)$},
Trans. of the Amer. Math. Soc., Vol. 359, No. 2 (Feb., 2007), pp. 649--669.

\bibitem[LW08]{LW08} N. A. Loehr and G. Warrington,
\textit{Nested quantum Dyck paths and $\nabla(s_\lambda)$},
Inter. Math. Res. Not. 2008; Vol 2008: article ID rnm157

\bibitem[Mac88]{Mac88}
I. G. Macdonald,
\textit{A New Class Of Symmetric Functions},
{Publ. I.R.M.A. Strasbourg, 372/S20, Actes 20 S\'eminaire Lotharingien},
1988, 131--171.

\bibitem[Mac95]{Mac95}
I. G. Macdonald, \textit{Symmetric functions and {H}all polynomials}, Oxford
  Mathematical Monographs, second ed., Oxford Science Publications, The
  Clarendon Press Oxford University Press, New York, 1995.
  
\bibitem[Mel16]{Mel16}
A. Mellit,
\textit{Toric braids and $(m,n)$-parking functions},
{\tt arXiv:1604.07456}.

\end{thebibliography}
\end{document}